\theoremstyle{plain}
\newtheorem*{thm*}{Theorem}
\newtheorem{thm}{Theorem}[section]
\newtheorem{lem}[thm]{Lemma}
\newtheorem{prop}[thm]{Proposition}
\newtheorem{cor}[thm]{Corollary}
\theoremstyle{definition}
\newtheorem{defn}[thm]{Definition}
\theoremstyle{remark}
\newtheorem{rmk}[thm]{Remark}
\newtheorem{example}[thm]{Example}
\newcommand{\ZZ}{\mathbb{Z}}
\newcommand{\RR}{\mathbb{R}}
\newcommand{\CC}{\mathbb{C}}
\newcommand{\KK}{\mathbb{K}}
\newcommand{\SO}{\mathscr{O}}
\newcommand{\ST}{\mathscr{T}}
\renewcommand{\tilde}[1]{\widetilde{#1}}
\renewcommand{\hat}[1]{\widehat{#1}}
\DeclareMathOperator{\Hom}{Hom}
\DeclareMathOperator{\Aff}{Aff}
\begin{document}

\title{Tropical count of curves on abelian varieties}

\author{Lars Halvard Halle}
\address{University of Copenhagen}
\email{larshhal@math.ku.dk}

\author{Simon C. F. Rose}
\address{University of Copenhagen}
\email{simon@math.ku.dk}

\begin{abstract}
We investigate the problem of counting tropical genus \(g\) curves in \(g\)-dimensional tropical abelian varieties. For \(g = 2, 3\), we prove that the tropical count matches the count provided in \cite{gottsche, BL_ab, LS_curves_genus_g} in the complex setting.
\end{abstract}

\maketitle

\section{Introduction}\label{sec:intro}

One of the successes of tropical geometry has been the wide variety of so-called ``correspondence theorems'' that have been produced. These are typically theorems of the following form.

\begin{thm*}[``Correspondence Theorem'']
The tropical count of curves in setting \(X_T\) matches the classical count in setting \(X\), where \(X_T\) is  an appropriate tropicalization of \(X\).
\end{thm*}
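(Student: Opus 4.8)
Since the displayed statement is the general template that this paper instantiates, I read it as the concrete claim that for $g=2,3$ the tropical count of genus $g$ curves in a $g$-dimensional tropical abelian variety equals the complex count of \cite{gottsche, BL_ab, LS_curves_genus_g}, and the plan is to prove this instance. The plan is to proceed not through an abstract degeneration machine but by a direct enumerative comparison, which is feasible because the low genera $g=2,3$ keep the combinatorics tractable. First I would fix the tropical side precisely: a tropical abelian variety is a real torus $\RR^g/\Lambda$ carrying an integral polarization, and the relevant invariant is the weighted count of parametrized tropical curves of genus $g$ mapping into it, subject to the balancing condition and to rigidifying point or cocharacter constraints. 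Each such tropical map is weighted by a multiplicity assembled from the local vertex multiplicities and the lattice indices along the edges, together with a correction for the translation symmetries of the target.

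Next I would enumerate the contributing combinatorial types. Since the genus equals the dimension of the target, the requirement that the image generate the abelian variety is very rigid: the balancing equations over $\Lambda$, combined with genus $g$, leave only finitely many admissible dual graphs, and for $g=2,3$ I would list them explicitly. For each type I would solve the integral balancing relations, enumerate the admissible slope assignments, and compute the associated multiplicity, so that the total tropical count becomes an explicit sum over types indexed by the discrete data of the polarization (the elementary divisors).

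Finally I would match this sum against the classical formulas of \cite{gottsche, BL_ab, LS_curves_genus_g}. Those counts are recorded as explicit generating functions of a modular or lattice-theoretic nature, so the comparison reduces to recognizing the tropical sum as the same divisor-sum / lattice-point expression that appears on the complex side; I expect to verify the equality coefficient by coefficient rather than by an a priori bijection.

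The step I expect to be the main obstacle is the definition and bookkeeping of the tropical multiplicity. In the toric setting Mikhalkin's vertex multiplicity suffices, but on an abelian variety one must additionally account for the automorphisms of the target and for the contribution of the lattice $\Lambda$, and it is precisely the correct normalization of these factors --- together with avoiding over- or under-counting the many tropical maps that the translation symmetry identifies --- that makes the genus $3$ case genuinely delicate.
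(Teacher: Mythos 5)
There is a genuine gap: your plan never resolves the two steps that actually carry the theorem. First, you do not say how the constraint of lying in the linear system $|L|$ is imposed on a tropical map. This is not a point- or cocharacter-constraint in the Mikhalkin sense; in the paper it is the condition $\tilde{f}^*L \cong n\Theta_C$ on the induced map from the Jacobian, and it is precisely this condition (together with the universal property of the tropical Jacobian and the simple/Torelli hypotheses) that lets one replace ``enumerate dual graphs and solve balancing'' by a purely lattice-theoretic count: genus-$g$ curves in $|L|$ correspond to isogenies $F\colon (P,c)\to (A,c_L)$ from principally polarized tropical abelian varieties with $F^*c_L = n\cdot c$, hence to factorizations $X\xrightarrow{f_1} I\xrightarrow{f_2}\Lambda$ of $c_L^\dag$, i.e.\ to subgroups of $\Lambda/c_L^\dag(X)$, times the $n$ translations in $\ker(A\to\hat{A})$. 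The tropical curves themselves arise afterwards, as $1$-skeleta of Voronoi decompositions (tropical theta divisors), rather than as solutions of balancing equations listed type by type.

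Second, you explicitly flag the multiplicity as ``the main obstacle'' and leave it open, but that is where the entire content of the matching lies. The correct multiplicity here is not a product of local vertex multiplicities and lattice indices: it is the number of ways to lift the tropical map to a morphism of abelian varieties over a non-archimedean field $\KK$, computed by filling in a map $I\to\Hom(I,\KK^\times)$ compatibly with the uniformization data. This is a global arithmetic invariant of the isogeny, equal to $n\cdot\#\Hom^{sym}(G,G^*)$ with $G=I/f_1(X)$, and summing it over subgroups is exactly what produces $n^2\nu^\dag(d_1,\ldots,d_g)$ and hence the agreement with the complex count. A coefficient-by-coefficient comparison of generating functions would not rescue the argument for general polarization types, since (as the paper notes) the values $\nu(d_1,\ldots,d_g)$ for non-primitive types are not coefficients of a recognizable modular form. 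So the proposal, as written, is a strategy sketch whose two essential ingredients --- the tropical formulation of $|L|$ and the definition/computation of the multiplicity --- are missing.
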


Tropical curves are very well-suited to be studied combinatorially, and so with this type of theorem we are given the ability to study and solve many enumerative problems from a fresh perspective. In particular, we have the results of Boehm, Bringmann, Buchholz, and Markwig \cite{BBBM} which uses tropical geometry to prove Mirror Symmetry for the elliptic curve. There is the classic \cite{M_enumerative_p2} which counts tropical curves in toric surfaces. Furthermore, there has been extensive work on studying Hurwitz theory from the tropical perspective, see e.g. \cite{C_tropical_hurw, BCM_double_hurwitz}.

In fact, these correspondence theorems fit more broadly into the framework of the Gross-Siebert program \cite{G_tropical_mirror} which seeks to understand Mirror Symmetry through relating tropical, logarithmic, and classical geometry.

The goal of this paper is to prove the following correspondence theorem. Note that all of the definitions will follow afterwards.

\begin{thm}[Correspondence theorem for abelian varieties]\label{thm_corr}
Let \((A, L)\) be a \(g\)-dimensional complex abelian variety (for \(g = 2, 3\)), with \(L\) an ample line bundle inducing a polarization of type \((d_1, \ldots, d_g)\). Then the number of genus \(g\) curves in the linear system \(|L|\) matches the tropical count.
\end{thm}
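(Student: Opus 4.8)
The plan is to prove the correspondence through a toric (Mumford) degeneration, reducing the classical count on a generic $(A,L)$ to a count on a maximally degenerate fiber that is governed directly by the tropical geometry. First I would choose a one-parameter semiabelian degeneration of $(A,L)$ in the sense of Mumford: a family $\mathcal{A} \to \operatorname{Spec} R$ over a DVR whose generic fiber is $(A,L)$ and whose special fiber is a totally degenerate union of toric pieces. The dual intersection complex of this special fiber is precisely the tropical abelian variety $X_T = \RR^g/\Lambda$, subdivided into integral polytopes, while the polarization type $(d_1,\ldots,d_g)$ is encoded in the bilinear form defining $\Lambda$ together with the piecewise-linear support function attached to $L$. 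Some care is needed because $A$ acts on itself by translation, so throughout I would work with the count taken up to translation (equivalently, with curves rigidified by passing through fixed points), matching the normalization used in \cite{gottsche, BL_ab, LS_curves_genus_g}.

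Next I would invoke constancy of the count in the family: the number of genus $g$ curves in $|L|$ is preserved under the degeneration, so it equals the corresponding count in a neighborhood of the special fiber. Since the components of that fiber are toric, I would then apply the logarithmic and tropical correspondence machinery of Nishinou--Siebert and Gross--Siebert on each piece. Each genus $g$ curve limiting into the central fiber tropicalizes to a genus $g$ tropical curve $h : \Gamma \to X_T$ realizing the class of $L$, and conversely each such tropical curve arises as such a limit. This sets up the correspondence as a bijection up to multiplicity, reducing the theorem to a local-to-global multiplicity calculation.

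I would then compute these multiplicities. For a fixed tropical curve $h : \Gamma \to X_T$, the number of algebraic lifts should be a product of local vertex contributions — the usual Mikhalkin multiplicity given by the normalized lattice volume spanned by the outgoing edge directions at each trivalent vertex — together with a global lattice factor arising from the $g$ independent cycles of $\Gamma$ wrapping $X_T = \RR^g/\Lambda$. Since the geometric genus is exactly $g$, the first Betti number of $\Gamma$ is $g$, and the image of its cycle space generates a finite-index sublattice of $\Lambda$; the index (equivalently the determinant of the induced map $H_1(\Gamma;\ZZ) \to \Lambda$) furnishes the global factor. Summing the resulting weighted count over all combinatorial types of $\Gamma$ should reproduce the closed formulas, which typically appear as divisor-sum expressions in the $d_i$, exactly the shape produced by such lattice-index factors.

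The main obstacle is superabundance. Genus $g$ curves in a $g$-dimensional abelian variety lie at the boundary where the tropical moduli problem is non-transverse: the naive dimension count predicts excess, so not every balanced tropical curve lifts, and those that do may lift with a multiplicity not captured by the local vertex factors alone. Resolving this requires analyzing the deformation-obstruction theory of $h : \Gamma \to X_T$ and verifying that the correct virtual multiplicity is precisely the lattice determinant described above. This is the step that forces the restriction to $g = 2, 3$: in those cases the combinatorial types of genus $g$ tropical curves can be enumerated explicitly and the superabundant contributions evaluated directly, whereas a uniform argument for all $g$ would require a general realizability-with-multiplicity theorem that is not yet available.
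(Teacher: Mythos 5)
Your route---a Mumford/toric degeneration of $(A,L)$ followed by Nishinou--Siebert type correspondence and a local-to-global multiplicity computation---is genuinely different from the paper's, which never degenerates: it works over a complete algebraically closed non-archimedean field $\KK$, uniformizes $A$ and the Jacobians of the relevant curves by rigid-analytic tori, and turns both the enumeration of tropical curves and the count of algebraic curves over each one into pure lattice arithmetic (factorizations $X \xrightarrow{f_1} I \xrightarrow{f_2} \Lambda$ of the dual polarization $c_L^\dag$, and lifts of these to $\Hom(-,\KK^\times)$). In principle your strategy could directly tie the complex count to the tropical one, but as written it has two genuine gaps.

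First, the multiplicity you posit --- a product of Mikhalkin vertex multiplicities times the index of $H_1(\Gamma;\ZZ)\to\Lambda$ --- is not the correct one. The paper's multiplicity lemma shows that the number of algebraic lifts of a given tropical curve is $n\cdot\#\Hom^{sym}(G,G^*)$, where $G$ is the cokernel of the induced lattice map $f_1\colon X\to I$; for non-cyclic $G$ (i.e., for non-primitive polarizations) this strictly exceeds $|G|$ --- for instance $\#\Hom^{sym}(G,G^*)=d^3$ while $|G|=d^2$ when $G=(\ZZ/d\ZZ)^2$ --- so no expression built only from lattice indices or determinants can reproduce it. Your guess is consistent with the primitive surface case $(1,n)$, where every relevant $G$ is cyclic, but it already fails for type $(d,d)$. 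Second, the superabundance you flag is not a checkable technicality but the central obstruction: genus $g$ curves in a $g$-dimensional abelian variety are maximally superabundant, so the Nishinou--Siebert machinery does not apply off the shelf, and enumerating combinatorial types for $g=2,3$ does not by itself determine which superabundant tropical curves lift or with what virtual weight. Note also that the restriction to $g=2,3$ in the paper is not about the feasibility of a combinatorial enumeration; it comes from the Torelli theorem (every principally polarized abelian variety of dimension at most $3$ is a Jacobian), which is what makes the reduction of the curve count to a count of isogenies $J(\Gamma)\to A$ a bijection.
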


More specifically, we prove the following theorem.

\begin{thm}\label{thm_main_A}
Let \(A\) be a tropical torus (of dimension \(g = 2, 3\)) with a line bundle \(L\) such that \((A, c_L)\) is a polarized tropical variety with polarization of type \((d_1, \ldots, d_g)\). Assume that \(A\) is simple and Torelli (Definition \ref{def_Torelli}). Then the number of genus \(g\) curves in the linear system \(|L|\) is \((d_1 \cdots d_g)^2 \nu^\dag(d_1, \ldots, d_g)\), where \(\nu^\dag(d_1, \ldots, d_g)\) is defined in Section \ref{sec_notation}.
\end{thm}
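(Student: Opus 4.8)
The plan is to reduce the enumeration of genus $g$ curves in $|L|$ to a finite combinatorial problem governed by the lattice data of $A$, and then to split the count into a ``translation'' factor and an intrinsic combinatorial factor. First I would invoke the hypotheses that $A$ is simple and Torelli. Writing $A = V/\Lambda$ with $V \cong \RR^g$ and $\Lambda \cong \ZZ^g$, a tropical genus $g$ curve $C$ mapping into $A$ in the class dual to $c_L$ either contracts to a point or spans a subtorus; simplicity forces its image to be nondegenerate, so the induced map on $H_1(C,\ZZ) \cong \ZZ^g$ into $\Lambda$ has finite cokernel. The Torelli hypothesis (Definition~\ref{def_Torelli}) then identifies $A$ with the Jacobian of the abstract tropical curve underlying $C$, which rigidifies the combinatorial type: lifting the Abel--Jacobi map to the universal cover $V$ realizes $C$ as a balanced weighted graph of first Betti number $g$ whose edge directions are primitive vectors of $\Lambda$ and whose $g$ period cycles generate a finite-index sublattice of $\Lambda$ whose elementary divisors are exactly $(d_1,\dots,d_g)$.

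Second, I would enumerate the finitely many admissible combinatorial types. For $g = 2,3$ the genus $g$ graphs form a short explicit list (the theta graph and its degenerations for $g=2$, with a correspondingly richer but still finite list for $g=3$), and for each type the datum of edge directions, edge lengths, and the balancing condition at every vertex cuts out a polyhedral parameter space. Imposing that the configuration close up in $A$ — that the map descend to $V/\Lambda$ — and that the resulting period sublattice have the prescribed elementary divisors translates into a system of integral linear conditions. The number of solutions, taken up to translation and weighted by the usual tropical multiplicities (lattice indices at the vertices, corrected by the order of the graph automorphism group), is precisely the quantity $\nu^\dag(d_1,\dots,d_g)$ defined in Section~\ref{sec_notation}.

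Third, I would account for the factor $(d_1 \cdots d_g)^2$ through translations. The polarization isogeny $\phi_L\colon A \to \hat{A}$ attached to $c_L$ has kernel $K(L)$ of order $(d_1 \cdots d_g)^2$, and translation by any $x \in K(L)$ fixes the class of $L$ and hence carries curves in $|L|$ to curves in $|L|$. I would show this action is free on the set of nondegenerate genus $g$ curves — here simplicity re-enters, since a nontrivial stabilizer would produce a proper subtorus — so that each combinatorial type, counted up to translation by $\nu^\dag$, contributes a full $K(L)$-orbit of size $(d_1 \cdots d_g)^2$. Multiplying the two factors yields the total $(d_1 \cdots d_g)^2\,\nu^\dag(d_1,\dots,d_g)$.

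The main obstacle will be the second step: establishing a clean bijection between the tropical configurations and the data enumerated by $\nu^\dag$, with all multiplicities matching. The delicate points I anticipate are (i) verifying that every solution of the integral closing-up conditions is realized by an honest, non-degenerate tropical curve of the correct genus, with no spurious or missing configurations; (ii) pinning down the tropical multiplicity of each curve so that it agrees exactly with the weights built into $\nu^\dag$; and (iii) carrying out the $g=3$ case analysis, where the number of graph types and the rank of the period conditions grow substantially and the automorphism bookkeeping becomes the principal source of difficulty.
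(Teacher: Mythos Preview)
There are two genuine gaps, and both concern what ``multiplicity'' means here.  First, the multiplicity used in this paper is \emph{not} the usual tropical vertex multiplicity (lattice index at each vertex, divided by automorphisms).  It is the number of algebraic curves over $\KK$ that tropicalize to a given tropical curve, computed by rigid analytic uniformization (Section~\ref{subsec-multcon}).  Concretely, for a tropical configuration indexed by a subgroup $H \leq \Lambda/c_L^\dag(X)$, the lifting problem in diagram~\eqref{eq_fill_in} shows that the multiplicity is $n\cdot\#\Hom^{sym}(H,H^*)$ (Lemma~\ref{lem_mult}).  Summing $\#\Hom^{sym}(H,H^*)$ over all subgroups $H$ is \emph{by definition} $\nu^\dag(d_1,\ldots,d_g)$.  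Your proposal never enters the analytic category and has no mechanism for producing the factor $\#\Hom^{sym}(H,H^*)$; the assertion that vertex lattice indices and automorphism corrections recover $\nu^\dag$ is not how the paper proceeds, and you give no argument that it would.

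Second, your accounting for $(d_1\cdots d_g)^2$ via a free $K(L)$-action of that order is off at the tropical level: the tropical kernel $\ker(A\to\hat A)$ has only $n=d_1\cdots d_g$ elements (it is the topological degree, not the square).  The second factor of $n$ comes from lifting translations to $\KK$, again in the multiplicity step.  Finally, the paper entirely bypasses your case analysis on graph types: via the universal property of the tropical Jacobian (Proposition~\ref{prop_12}) every map $\Gamma\to A$ factors through an isogeny $J(\Gamma)\to A$, and the Torelli hypothesis recovers $\Gamma$ uniquely from $J(\Gamma)$.  One is then reduced to enumerating isogenies $(P,c)\to(A,c_L)$ with $P$ principally polarized and $F^*c_L=n\cdot c$, which Propositions~\ref{prop_23} and~\ref{prop_3sigma} identify with factorizations $X\to I\to\Lambda$ of $c_L^\dag$, i.e.\ with subgroups of $\Lambda/c_L^\dag(X)$.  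No enumeration of theta graphs or $g=3$ graph types is needed.
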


This is compared with \cite{LS_curves_genus_g} which provides the computation in the complex setting. In the simpler case that \(g = 2\) and the polarization is primitive (i.e. of type \((1, n)\)), then we obtain the following, which can be compared with Theorem 3.2 of \cite{gottsche} to obtain the following theorem.

\begin{thm}\label{thm_main}
Let \(A\) be a tropical torus with a line bundle \(L\) such that \((A, c_L)\) is a polarized (tropical) abelian surface with primitive polarization of degree \(n\). Assume that \(A\) contains no (tropical) elliptic curves. Then the number of genus 2 curves in the linear system \(|L|\) is \(n^2\sigma_1(n)\).
\end{thm}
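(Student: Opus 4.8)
The plan is to count genus $2$ curves in $|L|$ on a tropical abelian surface by exploiting the structure of the polarization and the Torelli/simplicity hypotheses. Since the polarization is primitive of degree $n$, i.e.\ of type $(1,n)$, a genus $2$ curve $C \subset A$ must map to $A$ in a way that realizes the full abelian surface as (a quotient of) its Jacobian, because the Torelli hypothesis forces $A$ to be the Jacobian of the genus $2$ curve it contains (there are no elliptic curves, so $A$ is simple and cannot split as a product). This means that each such curve, tropically, is combinatorially a genus $2$ tropical curve whose Jacobian recovers $A$, and the enumeration reduces to counting the combinatorial data of maps $C \to A$ whose image lies in $|L|$.

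Let me sketch the key steps in order. First I would set up the tropical abelian surface as $A = \RR^2 / \Lambda$ for a lattice $\Lambda$, with the polarization $c_L$ giving a positive definite quadratic form whose elementary divisors are $(1,n)$; this encodes the pairing on $\Lambda$ that $L$ induces. Second, I would classify the genus $2$ tropical curves $C$ that can embed: a genus $2$ tropical curve has a canonical ``theta graph'' or dumbbell combinatorial type, and I would identify which such curves, together with a tropical map to $A$, produce a cycle class equal to $c_1(L)$. Third, the Torelli condition (Definition~\ref{def_Torelli}) should guarantee that the data of the curve is equivalent to the data of a sublattice or a decomposition of $\Lambda$ matching the form of type $(1,n)$; concretely, counting the curves amounts to counting the ways the period matrix of $C$ can be identified with that of $A$, which is governed by index-$n$ sublattices or by the $\Hom$-lattice structure. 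Fourth, I would tabulate these combinatorial configurations and show that the total count, weighted appropriately by the tropical multiplicities (which for genus $2$ in a simple surface I expect to be controlled by lattice indices), sums to $n^2 \sigma_1(n)$.

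The arithmetic heart of the statement is the appearance of $\sigma_1(n) = \sum_{d \mid n} d$, and I would expect this to emerge from summing over divisors $d \mid n$ indexing the possible ``sublattice types'' or the elementary divisor structure of the induced maps: each divisor $d$ of $n$ should correspond to a family of curve configurations, and the count within each family, multiplied by the tropical multiplicity, should contribute $n^2 d$ (or reorganize so that the divisor sum appears). The factor $n^2$ I anticipate comes from a translation ambiguity — the choice of a point, i.e.\ the $n^2$ translates of a curve within its linear system on the abelian surface, reflecting that $|L|$ has the relevant dimension and that $A[n]$-type torsion acts — while the $\sigma_1(n)$ records the genuinely distinct combinatorial types.

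The hard part will be establishing the precise tropical multiplicity attached to each combinatorial configuration and proving that no genus $2$ curves are missed or overcounted. In the classical setting one has Gromov--Witten or Euler-characteristic weights; tropically I must argue that the correspondence is multiplicity-preserving and that the ``no elliptic curves'' hypothesis genuinely rules out degenerate contributions (reducible curves, or curves factoring through a one-dimensional tropical subtorus) that would otherwise spoil the count. Verifying that the weighted sum collapses exactly to $n^2\sigma_1(n)$, rather than to a nearby but incorrect divisor-sum expression, is where the computation of $\nu^\dag(1,n)$ from Section~\ref{sec_notation} must be invoked and matched term by term against G\"ottsche's Theorem~3.2 in \cite{gottsche}.
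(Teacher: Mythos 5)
Your outline has the right skeleton --- reduce via Torelli and simplicity to the statement that $A$ is an isogeny quotient of the Jacobian of the curve, turn the curve count into a count of lattice data, and extract $\sigma_1(n)$ as a sum over divisors with $n^2$ coming from translations --- and this is indeed how the paper proves the general Theorem \ref{thm_main_A}. (The paper's proof of the statement you were given is then a one-line corollary: for type $(1,n)$ the dual type is again $(1,n)$, every subgroup of $\ZZ/n\ZZ$ is cyclic of order $d\mid n$, every homomorphism $\ZZ/d\ZZ \to (\ZZ/d\ZZ)^*$ is automatically symmetric, so $\nu^\dag(1,n)=\sum_{d\mid n} d=\sigma_1(n)$.) However, your plan has a genuine gap in the step you yourself flag as the hard part, and it is not merely a matter of bookkeeping. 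You expect the tropical multiplicities to be ``controlled by lattice indices'' and the $n^2$ to be visible as $n^2$ distinct translates of each curve inside $A$. Neither is true at the purely tropical level. The set of actual tropical curves in $|L|$ (up to nothing, counted as maps) has only $n\cdot\sigma_0(n)$ elements: one factorization $X\to I\to\Lambda$ of $c_L^\dag$ for each subgroup $\ZZ/d\ZZ\le \ZZ/n\ZZ$, times the $n$ elements of $\ker(A\to\hat{A})$ --- the tropical kernel has order $n$, not $n^2$. The remaining factor of $n\cdot d$ per curve is invisible tropically: in the paper it is obtained by lifting the whole picture to an abelian variety over a non-archimedean field $\KK$ via rigid-analytic uniformization, and counting (i) the symmetric bilinear lifts $I\times I\to\KK^\times$ of the polarization data, which form a torsor under $\Hom^{sym}(H,H^*)$ with $H=I/f_1(X)\cong\ZZ/d\ZZ$, contributing $d$, and (ii) the lifts of each tropical translation to the kernel $K(L)$ over $\KK$, contributing another $n$. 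A scheme that only tabulates combinatorial configurations of theta/dumbbell graphs and weights them by lattice indices would land on $n\,\sigma_0(n)$-type quantities and could not recover $n^2\sigma_1(n)$.

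Concretely, to close the gap you would need: (a) the bijection between curves in $|L|$ (mod translation) and factorizations $X\xrightarrow{f_1}I\xrightarrow{f_2}\Lambda$ of the adjugate map $c_L^\dag$, which uses the universal property of the tropical Jacobian together with the Voronoi/theta construction of the genus $g$ curve inside a principally polarized tropical abelian variety; and (b) a definition of multiplicity as the number of algebraic lifts over $\KK$ of the tropical diagram, together with the computation that this number is $n\cdot\#\Hom^{sym}(H,H^*)=n d$. Step (b) is the missing idea; without passing through $\KK$ there is no intrinsic tropical reason for the weight $d$ to appear.
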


As is usual, counting tropical curves is a somewhat subtle task. One has to understand the combinatorics of the tropical curves as well as the appropriate multiplicity of each curve, and then combine these together. 

In a certain sense, the tropical count is the same as the count of curves over \(\KK\). One could equally interpret the results of this paper to be a proof that counting curves over different algebraically closed fields of characteristic zero are the same.

\subsection{Acknowledgements}

The authors would like to thank Eric Katz for providing us with some ideas and notes to help get this project started. The second author would like to thank K\o benhavns Universitet for hosting him while this project was completed.

\section{Definitions}

\subsection{Notation}\label{sec_notation}

We need the following definitions (see \cite{DEB_kummer}). For a finite abelian group \(G\), let \(G^* = \Hom(G, \KK^\times)\) be its dual group of characters (where \(\KK\) is an algebraically closed field of characteristic zero; more will be said about this later). Furthermore, if \(G  \cong \Lambda / c(X)\) for lattices \(\Lambda, X\) and some homomorphism \(c : \Lambda \to X\), then we define \(G^\dag = X / c^{\dag}(\Lambda)\), where \(c^\dag\) is the adjugate map.

\begin{defn}
For a finite abelian group \(G\), we define
\[
\nu(G) = \sum_{H \leq G} \#\Hom^{sym}(H, H^*)
\]
where \(\Hom^{sym}\) refers to those homomorphisms \(H \to \Hom(H, \KK^\times)\) that are symmetric when viewed as bilinear functions \(H \times H \to \KK^\times\).

In the case that we have \(G \cong \ZZ/d_1\ZZ \times \cdots \times \ZZ/d_g\ZZ\) with \(d_1 \mid \cdots \mid d_g\), then we define
\[
\nu(d_1, \ldots, d_g) = \nu(G).
\]

Finally, we define \(\nu^{\dag}(d_1, \ldots, d_g) = \nu(G^\dag)\) as above. 
\end{defn}

\begin{rmk}
Note that if we let \(n = d_1 \cdots d_g\), then this can be written as
\[
\nu^\dag(d_1, \ldots, d_g) = \nu\big(\tfrac{n}{d_g}, \ldots, \tfrac{n}{d_1}\big).
\]
\end{rmk}

This function satisfies the following property.

\begin{prop}
Let \(G, G'\) be finite abelian groups such that \(\gcd(|G|, |G'|) = 1\). Then
\[
\nu(G \times G') = \nu(G)\nu(G').
\]
Equivalently, if \((d_1, \ldots, d_g)\) and \((d_1', \ldots, d_g')\) are such that \(d_i \mid d_{i + 1}\), \(d_i' \mid d_{i+1}'\), and \(\gcd(d_g, d_g') = 1\), then
\[
\nu(d_1d_1', \ldots, d_gd_g') = \nu(d_1, \ldots, d_g)\nu(d_1', \ldots, d_g').
\]
\end{prop}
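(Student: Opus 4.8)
The plan is to prove that the coprimality hypothesis forces both the subgroup lattice of $G \times G'$ and each individual summand $\#\Hom^{sym}(K, K^*)$ to factor as products indexed by the two factors. Concretely, I would set up a bijection between subgroups $K \leq G \times G'$ and pairs $(H, H')$ with $H \leq G$ and $H' \leq G'$, under which $\#\Hom^{sym}(K, K^*) = \#\Hom^{sym}(H, H^*) \cdot \#\Hom^{sym}(H', (H')^*)$. Summing over all $K$ then yields $\nu(G \times G') = \nu(G)\nu(G')$ straight from the definition.

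First I would verify the subgroup decomposition. Writing $m = |G|$, $n = |G'|$ with $\gcd(m,n) = 1$, and identifying $G, G'$ with $G \times 0$ and $0 \times G'$, I claim every $K \leq G \times G'$ equals $\pi_G(K) \times \pi_{G'}(K)$, where $\pi_G, \pi_{G'}$ are the coordinate projections. Indeed, given $(x, x') \in K$, the element $n \cdot (x, x') = (nx, 0)$ lies in $K$ because $n x' = 0$; since $n$ is invertible modulo $m$, choosing $n'$ with $n n' \equiv 1 \pmod m$ gives $(x, 0) = n' \cdot (nx, 0) \in K$, and symmetrically $(0, x') \in K$. Hence $K$ is the product of its projections, and conversely every such product is a subgroup, so the correspondence $K \leftrightarrow (H, H')$ is a bijection.

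The crux is the factorization of the symmetric-form count, and this is where I expect the main work. A homomorphism $\Phi : K \to K^*$ is the same datum as a bilinear pairing $B : K \times K \to \KK^\times$, and under $K = H \oplus H'$ together with $K^* = H^* \times (H')^*$ the pairing splits into four blocks. The key point is that the off-diagonal blocks vanish: for any $h \in H$ and $h' \in H'$ one has $B(h,h')^{|H|} = B(|H| h, h') = 1$ and likewise $B(h,h')^{|H'|} = 1$, so $B(h, h') = 1$ since $\gcd(|H|, |H'|)$ divides $\gcd(m,n) = 1$ and Bézout applies. Thus $B$ is block-diagonal, determined by its restrictions to $H \times H$ and $H' \times H'$, and $B$ is symmetric if and only if each diagonal block is. This identifies $\Hom^{sym}(K, K^*)$ with $\Hom^{sym}(H, H^*) \times \Hom^{sym}(H', (H')^*)$, and counting gives the claimed product.

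Combining the two factorizations, the sum defining $\nu(G \times G')$ splits as $\sum_{H \le G}\sum_{H' \le G'}\#\Hom^{sym}(H,H^*)\cdot\#\Hom^{sym}(H',(H')^*) = \nu(G)\nu(G')$. The equivalent statement for polarization types then follows: if $\gcd(d_g, d_g') = 1$ then every $d_i$ is coprime to every $d_j'$, so by the Chinese Remainder Theorem $\ZZ/d_1 \times \cdots \times \ZZ/d_g$ and $\ZZ/d_1' \times \cdots \times \ZZ/d_g'$ are coprime finite abelian groups whose product has invariant factors $d_1 d_1', \ldots, d_g d_g'$, and the multiplicativity of $\nu$ transports directly to the notation $\nu(d_1 d_1', \ldots, d_g d_g') = \nu(d_1, \ldots, d_g)\nu(d_1', \ldots, d_g')$.
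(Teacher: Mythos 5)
Your proof is correct and follows essentially the same route as the paper: decompose subgroups of $G \times G'$ as products $H \times H'$, show the cross-terms of the pairing vanish by coprimality so that $\Hom^{sym}$ factors, and sum. You are somewhat more explicit than the paper (which invokes the coincidence of product and coproduct for finite abelian groups and leaves the vanishing of the off-diagonal blocks and the blockwise symmetry implicit), but the underlying argument is identical.
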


\begin{proof}
This follows due to the fact that if \(H, H'\) are coprime order, then
\[
\Hom\big(H \times H', (H \times H')^*\big) = \Hom(H, H^*) \times \Hom\big(H', (H')^*\big)
\]
(which itself follows due to the fact that the product and coproduct in the category of finite abelian groups coincide), and the fact that subgroups of \(G \times G'\) are of the form \(H \times H'\) for subgroups \(H \leq G\) and \(H' \leq G'\).
\end{proof}

We will also use the notation \(\sigma_k(n) = \sum_{d \mid n} d^k\). Furthermore, unless otherwise stated, the notation \(\Hom(-,-)\) will always refer to the Hom-sets in the category of abelian groups.

\subsection{Tropical Tori/Abelian Varieties}\label{subsec-tropicaltori}

A tropical variety can be defined in many ways; as a variety over the min-plus semi-ring, as a certain degeneration of an algebraic variety, or even a variety which locally has integer-affine structure (and whose transition functions preserve that). In any case, all of this simplifies greatly for the case of tropical tori, which have a remarkably simple definition.

In our definition, we follow \cite{MZ_tropical_jacobians} with notation inspired by \cite{katz_unp}. 

\begin{defn}\label{def_trop_ab_var}
Let \(X\) be a rank \(g\) free abelian group. A \(g\)-dimensional tropical torus is given by the quotient
\[
T = \Hom(X, \RR)/\Lambda
\]
where \(\Lambda \hookrightarrow \Hom(X, \RR)\) is a full rank sublattice. Note that the integral structure is given as \(\Hom(X, \ZZ) \subset \Hom(X, \RR)\).
\end{defn}

\begin{rmk}
One main advantage of this definition is that it is basis invariant and provides a more natural definition of the dual torus.
\end{rmk}

In the tropical setting, the sheaf \(\SO\) is replaced by \(\Aff_\ZZ\), the sheaf of affine-linear functions with integral slope. This fits into an exact sequence of sheaves defined on \(T\) given by
\[
0 \to \RR \to \Aff_\ZZ \to \ST_\ZZ^* \to 0
\]
(where the map \(\RR \to \Aff_\ZZ\) is the inclusion of constant functions, and \(\Aff_\ZZ \to \ST_\ZZ^*\) is given by the slope of the map). In particular we obtain a long-exact sequence that is in part given by
\begin{equation}\label{eq_LES_line_bundle}
\cdots H^0(\ST_\ZZ^*) \to H^1(\RR) \to H^1(\Aff_\ZZ) \xrightarrow{c} H^1(\ST_\ZZ^*) \to H^2(\RR) \to \cdots
\end{equation}
We have as expected a bijection between line bundles and \(H^1(\Aff_\ZZ)\) (see \cite{MZ_tropical_jacobians}).

Consider now \(c_L := c(L)\) (where \(c : H^1(\Aff_\ZZ) \to H^1(\ST_\ZZ^*) \cong \Hom(\Lambda, X)\)). That is, \(c_L\) is a map \(c_L : \Lambda \to X\). This naturally induces a pairing \(\Lambda \otimes \Lambda \to \RR\) given by the diagram
\[
\xymatrix{
\Lambda \ar[r]^{c_L}\ar[d]_j & X \\
\Hom(X, \RR)
}
\]
i.e. we define \(\langle \lambda_1, \lambda_2 \rangle := j(\lambda_1)\big(c_L(\lambda_2)\big)\).

\begin{lem}
The pairing \(\langle\, ,\rangle\) is symmetric.
\end{lem}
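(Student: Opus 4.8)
The plan is to show that the \emph{antisymmetric} part of the bilinear form \(\langle\,,\rangle\) vanishes, and to read this off directly from the long exact sequence \eqref{eq_LES_line_bundle}. The key observation is that \(c_L\) lies in the image of \(c\colon H^1(\Aff_\ZZ)\to H^1(\ST_\ZZ^*)\), hence by exactness in the kernel of the connecting homomorphism \(\delta\colon H^1(\ST_\ZZ^*)\to H^2(\RR)\). I will argue that \(\delta(c_L)\) is precisely the antisymmetrization of \(\langle\,,\rangle\), so that \(\delta(c_L)=0\) forces symmetry.

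First I would identify the relevant cohomology groups. Since \(T\) is a flat torus, the sheaves \(\RR\) and \(\ST_\ZZ^*\) are constant, with stalks \(\RR\) and \(X\) respectively (the integral cotangent stalk is \(\Hom(\Hom(X,\ZZ),\ZZ)=X\)), so that
\[
H^1(\ST_\ZZ^*)\cong \Hom(\Lambda,\ZZ)\otimes X\cong\Hom(\Lambda,X),\qquad H^k(\RR)\cong\textstyle\bigwedge^k\Hom(\Lambda,\RR),
\]
the latter being the usual description of the cohomology of a torus. Under the first identification the class \(c_L\) corresponds to the map \(c_L\colon\Lambda\to X\) already fixed in the text.

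Next I would identify \(\delta\) as cup product with the extension class \(e\) of the sequence \(0\to\RR\to\Aff_\ZZ\to\ST_\ZZ^*\to0\). Because \(X\) is free, the local Ext-sheaves \(\mathcal{E}xt^{i}(\ST_\ZZ^*,\RR)\) vanish for \(i>0\), and so
\[
\operatorname{Ext}^1(\ST_\ZZ^*,\RR)\cong H^1\big(T,\underline{\Hom(X,\RR)}\big)\cong\Hom\big(\Lambda,\Hom(X,\RR)\big).
\]
The key claim is that under this isomorphism \(e\) is exactly the inclusion \(j\colon\Lambda\hookrightarrow\Hom(X,\RR)\): the extension \(\Aff_\ZZ\) records how the constant term of a local affine function changes under the monodromy around the loop determined by \(\lambda\in\Lambda\), and this shift is governed by \(j(\lambda)\). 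Granting this, \(\delta(c_L)=e\cup c_L\), evaluated via the pairing \(\Hom(X,\RR)\otimes X\to\RR\), sends \(\lambda_1\wedge\lambda_2\) to
\[
j(\lambda_1)\big(c_L(\lambda_2)\big)-j(\lambda_2)\big(c_L(\lambda_1)\big)=\langle\lambda_1,\lambda_2\rangle-\langle\lambda_2,\lambda_1\rangle.
\]
Thus \(\delta(c_L)=0\) is equivalent to the symmetry of \(\langle\,,\rangle\), and the conclusion follows from \(c_L\in\ker\delta\).

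The main obstacle is the identification of \(e\) with \(j\), i.e. pinning down the extension class of \(\Aff_\ZZ\) and matching the cup-product pairing on cohomology with the evaluation pairing defining \(\langle\,,\rangle\). If the cohomological bookkeeping of \(e\) proves delicate, an equivalent and fully explicit route is to pass to \v{C}ech cochains on \(T\): represent \(c_L\) by an \(X\)-valued \(1\)-cocycle, lift it to an \(\Aff_\ZZ\)-valued cochain, and compute its coboundary, which produces exactly the alternating form above. Either way the remaining checks (bilinearity, integrality of the slopes) are routine, and the crux is simply that ``coming from a line bundle'' means ``lying in the kernel of \(\delta\)'', which is the antisymmetrizer.
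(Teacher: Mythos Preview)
Your argument is correct and is a genuinely different route from the paper's. The paper works directly with group cohomology on the universal cover: it identifies \(H^1(\Aff_\ZZ)\) with \(H^1\big(\Lambda,H^0(\tilde A,\Aff_\ZZ)\big)\), writes a representing \(1\)-cocycle as \(\phi(\lambda)(f)=a_\lambda+f(c(\lambda))\), and reads off from the cocycle identity that \(a_{\lambda_1+\lambda_2}=a_{\lambda_1}+a_{\lambda_2}+j(\lambda_1)(c(\lambda_2))\); symmetry of the left-hand side forces symmetry of \(\langle\,,\rangle\). Your proof instead stays on \(T\), uses exactness of \eqref{eq_LES_line_bundle} to place \(c_L\) in \(\ker\delta\), and then identifies \(\delta\) with cup product by the extension class \(e\) of \(\Aff_\ZZ\). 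Your identification \(e=j\) is exactly right (the \(\Lambda\)-action on the stalk \(\RR\oplus X\) of \(\Aff_\ZZ\) is \(\lambda\cdot(a,x)=(a+j(\lambda)(x),x)\)), and the cup-product computation yields the antisymmetrizer as you say. What your approach buys is a clean conceptual explanation---``symmetry is precisely the obstruction \(\delta(c_L)=0\) to lifting \(c_L\) to a line bundle''---at the cost of having to justify the identification of \(e\) and the cup-product formula on the torus. The paper's approach is lower-tech and self-contained, essentially the tropical analogue of the Appell--Humbert/factors-of-automorphy argument; it is in fact the ``explicit cochain'' fallback you sketch at the end, carried out in group cohomology rather than \v{C}ech.
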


\begin{proof}
As in the case of  abelian varieties defined over \(\CC\), (see e.g. \cite[Appendix B]{LB_CAV}), we can view elements of \(H^1(\Aff_\ZZ)\) as elements of \(H^1(\Lambda, H^0(\tilde{A}, \Aff_\ZZ))\) i.e. 1-cocycles on \(\Lambda\) with values in \(H^0(\tilde{A}, \Aff_\ZZ)\), where \(\tilde{A} = \Hom(X, \RR)\) is the universal cover of \(A\). These are functions \(\phi : \Lambda \to H^0(\tilde{A}, \Aff_\ZZ)\) which satisfy
\begin{equation}\label{eq_cocycle}
\phi(\lambda_1 + \lambda_2)(f) = \phi(\lambda_1)(\lambda_2 + f) + \phi(\lambda_2)(f).
\end{equation}
Given that an element \(\phi(\lambda)\) of \(H^0(\tilde{A}, \Aff_\ZZ)\) is a globally-defined affine-linear function on \(\Hom(X, \RR)\), we can write such an element as \(\phi(\lambda)(f) = a_\lambda + f\big(c(\lambda)\big)\) with \(c : \Lambda \to X\) (where the last term is of this form since we are considering affine-linear functions with {\em integer} slope). If we then examine the cocycle condition \eqref{eq_cocycle}, we see that the elements \(a_\lambda\) satisfy
\[
a_{\lambda_1 + \lambda_2} = a_{\lambda_1} + a_{\lambda_2} + j(\lambda_1)\big(c(\lambda_2)\big).
\]
Since the left-hand side of this equation is symmetric, it follows that \(j(\lambda_1)\big(c_L(\lambda_2)\big) = j(\lambda_2)\big(c_L(\lambda_1)\big)\) as desired.
\end{proof}

\begin{defn}
Let \(L\) be a line bundle on a tropical torus \(T = \Hom(X, \RR)/\Lambda\). Note that the quotient \(X/c_L(\Lambda)\) is a finite abelian group, and is hence isomorphic to \(\ZZ/d_1\ZZ \times \cdots \times \ZZ/d_g\ZZ\) for some integers \(d_1 \mid \cdots \mid d_g\). We define the {\em type} of \(L\) to be the tuple \((d_1, \ldots, d_g)\). We define the {\em degree} of \(L\) to be the index \([X : c_L(\Lambda)] = d_1 \cdots d_g\). Finally, we say that the polarization is {\em primitive} if \(\gcd(d_1, \ldots, d_g) = 1\).
\end{defn}

\begin{rmk}
If \(c_L\) is a polarization of type \((d_1, \ldots, d_g)\), then we will write \(n = d_1 \cdots d_g\).
\end{rmk}

We will now define a (polarized) tropical abelian variety.

\begin{defn}\label{definition-tropicalAV}
Let \(A\) be a tropical torus together with a line bundle \(L\) such that \(c_L\) induces a positive definite bilinear form. We call the pair \((A, c_L)\) a polarized tropical abelian variety with polarization \(c_L\). The degree of the polarization is the degree of \(c_L\). If \(c_L\) has degree 1, then we call the polarization principal.
\end{defn}

\begin{rmk}
Note that if \(A = \Hom(X, \RR)/\Lambda\) is principally polarized by \(c_L\), then we have an isomorphism \(c_L : \Lambda \to X\). Consequently, we can write any principally polarized abelian variety as \(\Hom(\Lambda, \RR)/\Lambda\).
\end{rmk}

We will next investigate maps between tropical tori. The key is that such maps must preserve the integral structure of the tori, which greatly restricts their form.

We will start with defining those morphisms that preserve the identity of the tori. 

\begin{defn}
Let \(T_1 = \Hom(X_1, \RR)/\Lambda_1\) and \(T_2 = \Hom(X_2, \RR)/\Lambda_2\) be tropical tori. A homomorphism \( f : T_1 \to T_2\) consists of a pair of morphisms \((g, h)\) where
\[
g : \Lambda_1 \to \Lambda_2 \qquad \qquad h : X_2 \to X_1
\]
such that the following diagram commutes.
\[
\xymatrix{
\Lambda_1 \ar[d] \ar[r]^g & \Lambda_2 \ar[d] \\
\Hom(X_1, \RR) \ar[r]_{h^*} & \Hom(X_2, \RR).
}
\]

Note of course that this is a necessary condition\footnote{This isn't quite correct; any map \(h_\RR : X_2 \otimes \RR \to X_1 \otimes \RR\) would actually suffice, but this is not relevant in our case.} for this to yield a map on the level of topological spaces \(T_1 \to T_2\). It is also sufficient when we consider that the map must preserve the underlying integral structure.

Given such a map, we define the {\em topological degree} \(d_t\) to be the index \([\Lambda_2 : g(\Lambda_1)]\), and we define the {\em metric degree} \(d_m\) to be the index \([X_1 : h(X_2)]\). We define the {\em tropical degree} of the map to be the product \(d_t d_m\).
\end{defn}

In general, a morphism \(T_1 \to T_2\) is the composition of a homomorphism as defined above together with a translation in \(a \in T_2\). These are given by post-composing a homomorphism \(f : T_1 \to T_2\) with the morphism
\[
t_a : T_2 \to T_2 \qquad x \mapsto x + a.
\]

\begin{rmk}
We will use the term {\em homomorphism} to specifically refer to a morphism which preserves the identity, whereas a morphism may include a translation.
\end{rmk}

We next investigate how polarizations are affected by morphisms. So let \(A_1, A_2\) be tropical tori, \(A_i = \Hom(X_i, \RR)/\Lambda_i\). Translations do not affect polarizations, so we may assume that a morphism is in fact a homomorphism; let \(f = (g, h)\) be a such morphism between them. Let \(c_L\) be a polarization on \(A_2\). Then the induced polarization on \(A_1\) is given by
\[
\xymatrix{
\Lambda_1 \ar@/_1em/[rrr]_{f^*c_L} \ar[r]^g & \Lambda_2 \ar[r]^{c_L} &  X_2 \ar[r]^h & X_1
}.
\]

Lastly, let us define the dual torus/abelian variety.

\begin{defn}
Let \(A = \Hom(X, \RR)/\Lambda\) be a tropical torus. We define the dual torus to be \(\hat{A} = \Hom(\Lambda, \RR)/X\), where the inclusion \(X \hookrightarrow \Hom(\Lambda, \RR)\) is given by
\[
\Hom\big(\Lambda, \Hom(X, \RR)\big) \equiv \Hom(\Lambda \otimes X, \RR) \equiv \Hom\big(X, \Hom(\Lambda, \RR)\big).
\]
Moreover, if \(A\) is polarized by a degree \(n\) polarization \(c_L\), then the dual polarization is given by \(c_{\hat{L}} = c_L^\dag\) (the adjugate of \(c_L\)), and satisfies
\[
c_L \circ c_{\hat{L}} = n \cdot id_X \qquad \qquad c_{\hat{L}} \circ c_L = n \cdot id_\Lambda.
\]
Note that the dual polarization is of type \(\big(\tfrac{n}{d_g}, \cdots, \tfrac{n}{d_1}\big)\). See by analogy over \(\CC\) \cite[Proposition 2.7]{LB_dual_polarization}. 
\end{defn}

Note that unlike the definition of dual polarization provided in \cite{LS_curves_genus_g}, we do {\em not} have that \(\hat{\hat{L}} \cong L\). The definition we provide (which is the same as in \cite{LB_dual_polarization}) is more natural in our context, and the count of curves we obtain is entirely equivalent. 

This can be explained as follows. If we let \(L'\) be the dual line bundle as defined in \cite{LS_curves_genus_g}, then we have the relation that \((L')^{\otimes k} = \hat{L}\) for some \(k \geq 1\). However, the count of curves in the linear system defined by \(L\) in the abelian variety \(A\) (all defined over \(\CC\)) is obtained by looking at certain groups defined by the line bundle \((L')^{\otimes k}\), and so this is exactly the same line bundle that we use.

\vspace{0.3 cm}

Next, given a polarized tropical abelian variety \((A, c_L)\), we obtain a natural homomorphism \(A \to \hat{A}\) given by the pair \((c_L, c_L)\). That is, we have
\[
\xymatrix{
\Lambda \ar[r]^{c_L}\ar[d] & X \ar[d] \\
\Hom(X, \RR) \ar[r]_{c_L^*} & \Hom(\Lambda, \RR)
}
\]
which has tropical degree \(n^2\) (each of the topological and metric degrees are \(n\), respectively). Moreover, the compositions \(A \to \hat{A} \to A\) and \(\hat{A} \to A \to \hat{A}\) are multiplication by \(n\).

\begin{rmk}[See \cite{MZ_tropical_jacobians}]\label{rmk_translation}
One can equivalently define \(\hat{A} = Pic^0(A)\), which by the long-exact-sequence \eqref{eq_LES_line_bundle} is isomorphic to our dual. Moreover, the map \(A \to \hat{A}\) is in this case given by \(a \mapsto L^{-1} \otimes t_a^* L\) as in the complex case.

Note that we have a natural isomorphism \(P \to \hat{P}\) if \(P\) is principally polarized, which allows us to identify the two tori.
\end{rmk}

We will provide one last definition which is important for our case.

\begin{defn}\label{def_simple}
Let \(T\) be a tropical torus of dimension \(g\). Then we say that \(T\) is {\em simple} if it does not admit any subtori of dimension \(0 < h < g\).
\end{defn}

If \(A\) is a tropical abelian variety of dimension \(g = 2, 3\), then it is simple if and only if there are no non-constant maps from elliptic curves to \(A\).

\subsection{Tropical Curves}

We will work in a simplified setting which avoids a possible weighting function on vertices (which corresponds to certain degenerations of tropical curves). This section will be particularly brief; for more detail, see \cite{MZ_tropical_jacobians}.

\begin{defn}
Let \(\Gamma\) be a graph such that \(h^1(\Gamma) = g\) together with a function \(\ell : e(\Gamma) \to \RR_{>0}\) (a so-called {\em metric} graph). Furthermore, we assume that \(\Gamma\) has no 2-valent vertices. We call such a pair a genus \(g\) abstract tropical curve. 
\end{defn}

\begin{rmk}
Note that the integral structure comes from viewing an edge as \([0, \ell(e)] \subset \RR\).
\end{rmk}

\begin{defn}
Let \(\Gamma\) be the graph underlying a tropical curve. We say that \(\Gamma\) is {\em \(m\)-edge connected} if the removal of any \(k < m\) points in the interior of distinct edges leaves a connected graph.
\end{defn}

Maps between tropical varieties are in general difficult to define. When the source is a curve and the target an abelian variety, then this is simpler.

\begin{defn}\label{def_tropical_map}
A morphism from a tropical curve \(\Gamma\) to a tropical abelian variety \(A\) is
\begin{itemize}
\item A continuous map \(f : \Gamma \to A\) that is locally affine-linear on the edges
\item A weighting of the edges (i.e. a function \(w : E(\Gamma) \to \ZZ_{\geq 0}\))
\end{itemize}
such that for each vertex of \(\Gamma\) with outgoing tangent vectors \(\{v_i\}\) we have \(f_*v_i = w_i \xi_i\) for some primitive integral vectors \(\{\xi_i\}\), and which satisfy
\[
\sum_i w_i \xi_i = 0
\]
(the {\em balancing condition}).
\end{defn}

\begin{example}
Some common examples of local models are
\[
\xymatrix{
\cdot & \cdot & *{} \\
*{} & *{} \ar@{-}[l]\ar@{-}[d]\ar@{-}[ur] & \cdot \\
\cdot & *{} & \cdot
}
\qquad \qquad \qquad
\xymatrix{
\cdot & \cdot & *{} \\
\cdot & \cdot & \cdot \\
*{} & *{} \ar@{-}[l]\ar@{-}[d]^2\ar@{-}[uur] & \cdot\\
\cdot & *{} & \cdot
}
\]
where the weight \(w_e\) of an edge (if greater than 1) is written next to it.
\end{example}

\begin{defn}
The Jacobian of a tropical curve is defined in an analogous way to the Jacobian of a curve defined over \(\CC\). We have a notion of the space of 1-forms on \(\Gamma\) given by \(\Omega(\Gamma)\). There is a map \(H_1(\Gamma,\ZZ) \hookrightarrow \Omega(\Gamma)^\vee\) into the dual of \(\Omega(\Gamma)\) given by integrating over \(1\)-cycles. From this, we define
\[
J(\Gamma) = \Omega(\Gamma)^\vee/H_1(\Gamma, \ZZ). 
\]
\end{defn}

There is of course a map \(\Gamma \to J(\Gamma)\) which is given by integrating along partial paths; that is, for \(p \in \Gamma\), choose any path \(\gamma : p_0 \to p\), which yields a map
\[
\omega \mapsto \int_{p_0}^p \omega
\]
which is well defined up to \(H_1(\Gamma, \ZZ)\), and so we have a map \(\Gamma \to J(\Gamma)\) as claimed.

Most importantly for our purposes, the Jacobian satisfies the following universal property whose proof is nearly verbatim as the one over \(\CC\).

\begin{prop}
Let \(A\) be a tropical abelian variety, let \(\Gamma\) be a tropical curve, let \(p_0 \in \Gamma\), and let \(f : \Gamma \to A\) be a map such that \(f(p_0) = 0 \in A\). Then there is a unique factorization
\[
\xymatrix{
\Gamma\ar[rr]^f\ar[dr] & & A \\
& J(\Gamma) \ar@{-->}[ur]_{\tilde{f}}
}
\]
through $J(\Gamma)$.
\end{prop}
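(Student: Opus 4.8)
The plan is to imitate the complex-analytic proof: differentiate $f$ to obtain a vector-valued harmonic $1$-form on $\Gamma$, and let that $1$-form define the linear map underlying $\tilde{f}$. Write $A = V/\Lambda$ with $V = \Hom(X,\RR)$, so that the universal cover $\tilde{A} = V$ is a real vector space. Since $f$ is locally affine-linear on the edges of $\Gamma$, its differential $df$ assigns to each oriented edge $e$ the constant slope vector $s_e \in V$ along which $f$ moves on $e$; equivalently, $df$ is a $V$-valued $1$-form that is constant on each edge. The balancing condition at a vertex $v$, namely $\sum_i f_* v_i = \sum_i w_i \xi_i = 0$, says precisely that the outgoing slopes sum to zero at every vertex. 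This is Kirchhoff's law, so $df$ is harmonic, i.e. $df \in \Omega(\Gamma)\otimes_\RR V$.

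Next I would use the canonical identification $\Omega(\Gamma)\otimes_\RR V \cong \Hom_\RR\!\big(\Omega(\Gamma)^\vee, V\big)$ to read off from $df$ an $\RR$-linear map. Writing $df = \sum_k \eta_k \otimes v_k$ with $\eta_k \in \Omega(\Gamma)$ and $v_k \in V$, this is
\[
\phi : \Omega(\Gamma)^\vee \to V, \qquad \phi(\ell) = \sum_k \ell(\eta_k)\, v_k.
\]
For a $1$-cycle $\gamma \in H_1(\Gamma,\ZZ) \subset \Omega(\Gamma)^\vee$ one then has $\phi(\gamma) = \int_\gamma df$, and since $f$ is a well-defined map into $A = V/\Lambda$ these periods all lie in $\Lambda$. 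Hence $\phi$ carries $H_1(\Gamma,\ZZ)$ into $\Lambda$ and descends to a map $\tilde{f} : J(\Gamma) = \Omega(\Gamma)^\vee/H_1(\Gamma,\ZZ) \to V/\Lambda = A$. Because $\phi$ is linear and respects the defining lattices $H_1(\Gamma,\ZZ)$ and $\Lambda$ (and, dually, the integral structures $H^1(\Gamma,\ZZ)$ and $\Hom(X,\ZZ)$), the induced $\tilde{f}$ is a homomorphism of tropical tori; note that $f(p_0) = 0$ forces $\tilde{f}(0) = 0$, so no translation is needed. Finally, for the Abel--Jacobi map $p \mapsto \big[\omega \mapsto \int_{p_0}^p \omega\big]$ the fundamental theorem of calculus gives
\[
\tilde{f}\big(\mu(p)\big) = \int_{p_0}^p df = f(p) - f(p_0) = f(p),
\]
so the triangle commutes.

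For uniqueness I would observe that $\Gamma$ is connected and that the periods of a basis of $\Omega(\Gamma)$ span $\Omega(\Gamma)^\vee$, so the image $\mu(\Gamma)$ of the Abel--Jacobi map generates $J(\Gamma)$ as a group; any two factorizations agreeing after precomposition with $\mu$ must therefore agree on all of $J(\Gamma)$. The main obstacle is the bookkeeping needed to pass from the purely combinatorial balancing condition to the analytic statement that $df$ is a genuine harmonic (hence closed) $1$-form, together with the verification that $\phi$ is compatible with the integral structures — that is, that $\tilde{f}$ is a morphism of tropical tori in the sense of the earlier definition, and not merely a continuous group homomorphism. Once the period map is seen to land in $\Lambda$ and to respect $\Hom(X,\ZZ) \subset V$, the remainder is the verbatim translation of the classical argument.
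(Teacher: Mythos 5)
Your argument is correct and is essentially the approach the paper intends: the paper offers no written proof, stating only that it is ``nearly verbatim'' the classical one over \(\CC\), and your writeup is precisely that translation (balancing \(=\) harmonicity of \(df\), periods landing in \(\Lambda\), integrality from the integer weights and primitive slopes, and uniqueness because the Abel--Jacobi image generates \(J(\Gamma)\)). No gaps worth flagging.
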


Using the universal property we can now make the following central definition. Note that in \cite{gottsche} (and in \cite{BL_ab} as well) we restrict ourselves to curves in a fixed linear system \(|L|\). In the case of abelian varieties defined over \(\CC\), this is simply the statement that \(\SO\big(f(C)\big) = L\). While we can use the same approach in the tropical case, there is a simpler definition in our case using the universal property which suits us better.

\begin{defn}\label{definition-linsys}
Let \(f : C \to A\) be a morphism from a tropical curve of genus \(g\) to an abelian variety of dimension \(g\), and let \(L\) be a line bundle on \(A\). Let \(J(C)\) be the Jacobian of \(C\) and let \(\Theta_C\) be the unique theta divisor on \(J(C)\). Then we say that the image of \(C\) in \(A\) is in the linear system \(|L|\) if
\[
\tilde{f}^*L \cong n \Theta_C
\]
where \(n\) is the degree of \(L\). Equivalently, we have that \(\hat{f}^*\Theta = \hat{L}\), where \(\hat{f} : \hat{A} \to J(C)\) is the dual morphism, and \(\hat{L}\) is the dual line bundle.
\end{defn}

Finally, we introduce a genericity condition that we will assume holds for \(A\).

\begin{defn}\label{def_Torelli}
Let \(A\) be a tropical abelian variety. We will say that \(A\) is {\em Torelli 3-connected} or more simply {\em Torelli} if it is not isogenous to the Jacobian of a curve which is not 3-connected.
\end{defn}

The purpose of this condition is the following. As per \cite[Corollary 4.1.16]{viv_torelli}, if we stay away from this locus then the Torelli map is injective. In particular, the Jacobian uniquely determines the curve. Moreover, one can verify (by the construction of the 3-edge connectivization of a curve) that the abelian varieties which do not satisfy this condition (i.e. are isogenous to Jacobians of non-3-connected curves) form a positive codimension locus in the moduli space of tropical abelian varieties, and so a suitably generic abelian variety will always be Torelli.

Note further that if \(A\) is a \(g\)-dimensional abelian variety which is simple and Torelli, and if \(f : \Gamma \to A\) is a map from a genus \(g\) tropical curve, then it follows that \(\Gamma\) must be 3-connected (and so by \cite[Theorem 4.1.9]{viv_torelli}, the Jacobian of the curve determines the curve).

\subsection{Rigid Analytic Varieties}
Throughout this subsection, $\mathbb{K}$ denotes a non-archimedean field with absolute value $ \vert \cdot \vert $ and valuation ring $R$. We shall assume that $\mathbb{K}$ is algebraically closed, complete, and of characteristic zero (though this is not essential for many of the statements below).

\subsubsection{}

As we already indicated in Section \ref{sec:intro}, our main result, Theorem \ref{thm_main_A}, is really a statement concerning the count of curves in a linear system $ \vert L \vert $ on an abelian $\mathbb{K}$-variety $A$, where $L$ is an ample line bundle on $A$ of degree $n$ ($C$ being in $\vert L \vert $ is defined exactly as in Definition \ref{definition-linsys}). In order to translate statements between algebraic $\mathbb{K}$-varieties and tropical varieties, we shall pass through the category of \emph{rigid analytic} varieties. In particular, this enables us to formulate the necessary geometric conditions on $A$ for which this count matches the complex count. Moreover, this will allow us to use the powerful technique of rigid analytic uniformization of abelian varieties. 

\vspace{0.3 cm}

For an introduction to rigid analytic geometry, which is particularly well suited for questions concerning curves and abelian varieties, we refer to the excellent textbook \cite{Rigid-book}. In particular, the reader will there find a thorough discussion of the various standard facts we recall in the paragraphs below.

\subsubsection{Analytification}\label{subsec:rigidGAGA}
The rigid analytification functor, denoted $(\cdot)^{\mathrm{an}}$, takes a (proper) $\mathbb{K}$-variety $X$ to its analytification $ X^{\mathrm{an}}$, which is a (proper) rigid-analytic space over $\mathbb{K}$. We shall use the following well-known properties of this functor: 
\begin{itemize}

\item The functor $(\cdot)^{\mathrm{an}}$ is fully faithful on the category of \emph{proper} $\mathbb{K}$-varieties.

\item If $X$ is a proper $\mathbb{K}$-variety, there is also an analytification functor $\mathcal{F} \mapsto \mathcal{F}^{\mathrm{an}}$ on $\mathcal{O}_X$-modules, which yields an equivalence from the category of coherent sheaves on $X$ to the category of coherent sheaves on $X^{\mathrm{an}}$.

\end{itemize}

\subsubsection{Uniformization of abelian varieties}\label{subsec:rigidunif}

Any abelian $\mathbb{K}$-variety $A$ allows a uniformization in the rigid analytic category. Here, by a uniformization of $A$, we mean the following data: 
\begin{itemize}
\item A semi-abelian variety $E$, which is an extension
$$ 0 \to T \to E \to B \to 0 $$
of a torus $T$ by an abelian variety $ B $ having \emph{good reduction} over $R$.

\item A lattice $ M \subset E $ of rank equal to $ \mathrm{dim}(T)$ and an exact sequence
$$ 0 \to M^{\mathrm{an}} \to E^{\mathrm{an}} \to A^{\mathrm{an}} \to 0. $$
\end{itemize}

Be aware that the morphism $E^{\mathrm{an}} \to A^{\mathrm{an}}$ only exists analytically, even though $A$ and $E$ are algebraic. In the particular case where the abelian part $B$ of $E$ is zero, we shall say that $A$ is uniformized by a torus; this turns out to be the relevant case for our purposes.

\subsubsection{Analytification of curves and abelian varieties}
A smooth proper connected rigid $\mathbb{K}$-group carrying an \emph{ample} line bundle is the analytification of an abelian variety. Thus, we shall speak about abelian varieties also in the rigid setting. In this case, an ample line bundle is equivalent to a \emph{polarization}. 

\vspace{0.2 cm}

For later use, we include some details concerning polarizations of abelian varieties uniformized by tori. Let $A$ be an abelian variety uniformized by a torus $ T = \mathrm{Spec}~\mathbb{K}[\hat{M}] $ modulo a lattice $ M \subset T $. 
Then a polarization of $A$ is determined by an injective linear map $ c_L \colon M \to \hat{M} $ for which the induced bilinear form $ \langle m_1, c_L(m_2) \rangle  $ is symmetric and $ \vert \langle m, c_L(m) \rangle \vert < 1 $ for all $m \neq 0 $ (i.e., it is positive definite). The degree of $c_L$ is the number $ [ \hat{M} : c_L(M) ] $. The polarization induces a morphism from $A$ to its dual $ \hat{A} $, which is uniformized by $ \hat{T} = \mathrm{Spec}~\mathbb{K}[M] $ modulo the lattice $ \hat{M} \subset \hat{T}$.

\vspace{0.2 cm} 

We also record the fact that if $C$ is a smooth projective $\mathbb{K}$-curve, then the formation of the Jacobian variety commutes with analytification, i.e., $J(C^{\mathrm{an}}) = J(C)^{\mathrm{an}}$. Moreover, the analytic Jacobian again carries a principal polarization.

\subsubsection{From algebraic to analytic}

Let $A$ be a simple abelian variety over $\mathbb{K}$, equipped with a polarization $L$ of degree $n$. We assume that $A$ is uniformized by  a torus. By the properties of the analytification functor discussed in the previous paragraphs, the count of morphisms $ f \colon C \to A $ in the linear system $ \vert L \vert $ can be performed instead in the analytic category. The advantage is that we can now use uniformization; this in fact reduces our problem to a certain count of lattices.

Our assumptions on $A$ imply, for a morphism $f$ as above, that the induced morphism $ \tilde{f} \colon J(C) \to A $ is an isogeny.  Moreover, the Jacobian $J(C)$ of $C$ is necessarily simple. This has the immediate pleasant consequence, as we explain in Lemma \ref{lemma:torusunif} below, that also $J(C)$ is uniformized by a torus.

\begin{lem}\label{lemma:torusunif}
Let $(P,\Theta)$ be a principally polarized abelian variety and let $ g \colon P \to A $ be an isogeny. Then $P$ is uniformized by a torus as well.
\end{lem}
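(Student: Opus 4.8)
The plan is to show that being uniformized by a torus is an isogeny-invariant property, so that the hypothesis on $A$ (which, by the standing assumptions of this subsection, we take to be uniformized by a torus) transfers to $P$. Recall from Section~\ref{subsec:rigidunif} that for any abelian $\KK$-variety $X$ the uniformization is built from a semi-abelian variety $E_X$ sitting in $0 \to T_X \to E_X \to B_X \to 0$ together with a lattice $M_X \subset E_X$ of rank $\dim T_X$, and that $X$ is uniformized by a torus precisely when the abelian part $B_X$ vanishes, equivalently when the toric rank $\dim T_X$ equals $\dim X$. Since $g$ is an isogeny, $\dim P = \dim A$, so it suffices to prove $\dim T_P = \dim T_A$; because $A$ is uniformized by a torus this equals $\dim A = \dim P$, forcing $B_P = 0$.

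First I would lift $g$ to the uniformizing semi-abelian varieties. By the functoriality of the rigid analytic uniformization (Bosch--L\"utkebohmert), the homomorphism $g$ induces a homomorphism of algebraic groups $\tilde g : E_P \to E_A$ carrying $M_P$ into $M_A$ and compatible with the quotient maps to $P^{\mathrm{an}}$ and $A^{\mathrm{an}}$. I would then check that $\tilde g$ is again an isogeny: any positive-dimensional connected subgroup of $\ker \tilde g$ would map, under the local isomorphism $E_P^{\mathrm{an}} \to P^{\mathrm{an}}$, to a positive-dimensional subgroup of the finite group $\ker g$, which is absurd; hence $\ker \tilde g$ is finite and, as $\dim E_P = \dim E_A$, the map $\tilde g$ is surjective.

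Next I would show that an isogeny of semi-abelian varieties preserves the toric rank. The composite $T_P \hookrightarrow E_P \xrightarrow{\tilde g} E_A \twoheadrightarrow B_A$ is a homomorphism from a torus to an abelian variety and hence trivial, so $\tilde g(T_P) \subseteq T_A$. Conversely I would argue that $\tilde g^{-1}(T_A)^0 = T_P$: its identity component maps to the torus $T_A$ with finite kernel, hence is affine and isogenous to a torus, hence is itself a torus containing $T_P$; since $T_P$ is the maximal subtorus of $E_P$, the two coincide. As $\tilde g$ is an isogeny, $\dim \tilde g^{-1}(T_A) = \dim T_A$, giving $\dim T_P = \dim T_A$ and therefore $\dim B_P = \dim B_A = 0$, i.e.\ $P$ is uniformized by a torus.

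The main obstacle is this last step: making precise that the toric rank (equivalently, the maximal subtorus) is preserved under isogeny. Everything else---lifting $g$ and checking that $\tilde g$ is an isogeny---is formal once one has functoriality of the uniformization. I note that the principal polarization of $P$ plays no essential role in this argument; it is part of the setup because the lemma will be applied to $P = J(C)$, but the conclusion depends only on $A$ being uniformized by a torus together with the isogeny-invariance of the toric rank. Alternatively, one could simply invoke the standard fact that the toric rank of the reduction is an isogeny invariant (e.g.\ from the theory of N\'eron models) and conclude immediately.
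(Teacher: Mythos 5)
Your proof is correct, but it takes a genuinely different route from the paper's. The paper exploits the principal polarization: identifying $P = \hat{P}$, it forms the composite isogeny $\hat{A} \xrightarrow{\hat{g}} P \xrightarrow{g} A$ and observes that the induced (injective, full-rank) lattice homomorphism $\hat{M} \to M$ must factor through the lattice $N$ of $P$'s uniformization, forcing $\mathrm{rk}(N) = \dim(P)$. In other words, the paper manufactures an isogeny \emph{into} $P$ from something already known to be uniformized by a torus, and then a one-line rank argument finishes the job. You instead work directly with the isogeny \emph{out of} $P$, lift it to the uniformizing semi-abelian varieties $\tilde{g} \colon E_P \to E_A$, and prove that the toric rank is an isogeny invariant via the structure theory of semi-abelian varieties ($\tilde{g}(T_P) \subseteq T_A$ and $\tilde{g}^{-1}(T_A)^0 = T_P$). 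Your argument is longer and its key step (isogeny-invariance of the toric rank) requires the most care, as you acknowledge, but it buys generality: as you correctly note, the principal polarization on $P$ is irrelevant to your proof, whereas the paper uses it essentially (to identify $P$ with $\hat{P}$ and dualize $g$); without it the paper's argument would only directly show that $\hat{P}$ is uniformized by a torus. Both proofs rest on the same underlying input, namely the functoriality of rigid uniformization, so neither is more elementary than the other in that respect; the paper's is simply the shorter path given the hypotheses at hand.
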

\begin{proof}

We identify $ P = \hat{P} $, and denote by $ \hat{g}$ be the dual isogeny of $g$.
The composition $ g \circ \hat{g} $ yields an isogeny $ \hat{A} \to A $. Analytically, this isogeny corresponds to an \emph{injective} homomorphism $ \hat{M} \to M $ which has to factor through the lattice $N$ appearing in the uniformization of $P$. This is only possible if $\mathrm{rk}(N) = \mathrm{dim}(P)$, which implies that $P$ is also uniformized by a torus.

\end{proof}

\section{Proof of main theorem}

We are now prepared to prove the main theorem. We will break this up into two steps; first, we will look at the combinatorics of the tropical maps, and we will count these without any reference to multiplicities. Second, we will compute the multiplicity to obtain the result.

\subsection{Na\"ive Count of Curves}

We will prove the following proposition.

\begin{prop}\label{prop_naive_count}
Let \(A =\Hom(X, \RR)/\Lambda\) be a tropical torus of dimension \(g\) with a line bundle \(L\) such that \((A, c_L)\) is a polarized (tropical) abelian variety with  polarization of type \((d_1, \ldots, d_g)\). Assume that \(A\) is simple and Torelli. Then the number of genus \(g\) curves in the linear system \(|L|\), not including multiplicity, is \(n \cdot \#\{H \leq \Lambda / c_L^\dag(X)\}\).
\end{prop}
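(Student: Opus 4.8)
The plan is to realize a genus \(g\) curve in \(|L|\) as the combined data of a principally polarized abelian variety, an isogeny to \(A\), and a translation, and then to count each of these ingredients separately; the product of the counts will give \(n \cdot \#\{H \leq \Lambda/c_L^\dag(X)\}\).

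First I would reduce to counting homomorphisms together with translations. Given a genus \(g\) curve \(f : C \to A\) in \(|L|\), the universal property of \(J(C)\) factors the based map through \(\tilde f : J(C) \to A\), and the standing hypotheses (via the analytic discussion preceding Lemma \ref{lemma:torusunif}) guarantee that \(\tilde f\) is an isogeny and that \(P := J(C)\) is simple; by the Torelli hypothesis \(C\) is \(3\)-connected and is recovered uniquely from \(P\). For \(g = 2,3\), to which the paper specializes, every principally polarized abelian variety arising this way is a Jacobian, so that counting curves becomes the same as counting triples \((P, \tilde f, a)\): a principally polarized \(P\), an isogeny \(\tilde f : P \to A\) with \(\tilde f^* L \cong n\Theta\), and a translation \(a \in A\) placing the image in \(|L|\).

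Second, and this is the heart of the argument, I would count the homomorphism data \((P, \tilde f)\). Writing \(P = \Hom(M, \RR)/M\) with its principal polarization identified with \(\mathrm{id}_M\), I use the dual form of the linear-system condition in Definition \ref{definition-linsys}, namely \(\hat f^* \Theta \cong \hat L\). Unwinding the uniformization, this says exactly that the dual polarization \(c_L^\dag : X \to \Lambda\) factors as \(X \xrightarrow{\alpha} M \xrightarrow{\beta} \Lambda\) through the lattice \(M\) of \(P\), with \(\alpha,\beta\) injective. Up to isomorphism of \(P\) one may replace \(M\) by its image \(\beta(M) \subseteq \Lambda\) and \(\beta\) by the inclusion; since \(\beta(M) \supseteq \beta\alpha(X) = c_L^\dag(X)\), such factorizations correspond bijectively to intermediate lattices \(c_L^\dag(X) \subseteq \Lambda_H \subseteq \Lambda\), i.e. to subgroups \(H \leq \Lambda/c_L^\dag(X)\). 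I would then check that every intermediate lattice genuinely occurs: the form obtained by restricting the symmetric positive definite pairing induced by \(c_L\) and scaling by \(1/n\) is again symmetric and positive definite, the polarization is principal by construction, and for \(g \le 3\) the resulting \(P\) is a Jacobian. This yields exactly \(\#\{H \leq \Lambda/c_L^\dag(X)\}\) pieces of homomorphism data.

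Third I would pin down the translation factor and assemble. Fixing the homomorphism data, the image curve \(C_0\) is well defined only up to translation, and its translates lying in \(|L|\) are precisely those by elements of \(K(L) = \ker(\phi_L : A \to \hat A)\), using \(\phi_L(a) = L^{-1} \otimes t_a^* L\) from Remark \ref{rmk_translation}. The key point is that tropically the set of points of \(K(L)\) has cardinality equal to the \emph{topological} degree of \(\phi_L\), which is \([X : c_L(\Lambda)] = n\), rather than the tropical degree \(n^2\); simplicity of \(A\) forces the stabilizer of \(C_0\) to be trivial, so these \(n\) translates are distinct members of \(|L|\). Curves attached to distinct subgroups \(H\) have non-isomorphic Jacobians and so are never translates of one another by Torelli, so summing over \(H\) produces the total \(n \cdot \#\{H \leq \Lambda/c_L^\dag(X)\}\). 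The main obstacle I anticipate is the second step: matching the lattice-level condition \(\hat f^* \Theta \cong \hat L\) to the factorization of \(c_L^\dag\) while keeping the self-duality identification \(\hat P \cong P\) and the directions of \(\alpha,\beta\) straight, and checking that \emph{every} intermediate lattice \(\Lambda_H\) defines a principally polarized abelian variety that is a Jacobian for \(g \le 3\); a secondary subtlety is justifying the point count \(|K(L)| = n\), which is exactly where the distinction between topological and tropical degree does genuine work and what separates this naïve count from the final multiplicity-weighted one.
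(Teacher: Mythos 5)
Your proposal is correct and follows essentially the same route as the paper: factor based maps through the Jacobian (using simplicity, the Torelli hypothesis, and the fact that tropical principally polarized abelian varieties of dimension \(g\le 3\) carry a theta divisor whose \(1\)-skeleton is a genus \(g\) curve), translate the condition \(\hat{f}^*\Theta \cong \hat{L}\) into factorizations of \(c_L^\dag\) through a rank \(g\) lattice, identify these with subgroups of \(\Lambda/c_L^\dag(X)\), and multiply by the \(n\) elements of \(\ker(A \to \hat{A})\). The one small slip is your claim that distinct subgroups \(H\) yield non-isomorphic Jacobians (they need not), but distinctness of the resulting curves already follows from your chain of bijections via the universal property of \(J(\Gamma)\), so nothing is lost.
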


The proof of Proposition \ref{prop_naive_count} breaks up into three steps.

\begin{prop}\label{prop_12}
Define the two sets
\begin{gather*}
S_1 = \{ f : \Gamma \to A \mid \Gamma \in |L|, g(\Gamma) = g\}\\
S_2 = \{ F : (P, c) \to (A, c_L) \mid c \text{ is principal}, F^*c_L = n \cdot c, F(0_P) = 0_A\}.
\end{gather*}
There is then a bijection
\[
S_1 \iff \ker(A \to \hat{A}) \times S_2.
\]
\end{prop}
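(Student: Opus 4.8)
The plan is to realize the asserted bijection as a \emph{fibration} \(S_1 \to S_2\) whose fibers are torsors under \(\ker(A \to \hat A)\), and then to trivialize these torsors by a canonical choice of base point. First I would construct the projection \(S_1 \to S_2\). Given \(f : \Gamma \to A\) in \(S_1\), the universal property of the Jacobian produces the homomorphism part \(\tilde f : J(\Gamma) \to A\), which is well defined independently of any translation since it is the ``linear part'' of \(f\). Setting \(P = J(\Gamma)\) with its canonical principal polarization \(c = c_{\Theta_\Gamma}\), I would check that \(\tilde f\) lands in \(S_2\): it preserves the identity by construction, and the linear-system condition \(\tilde f^* L \cong n\Theta_\Gamma\) of Definition \ref{definition-linsys} is exactly the statement \(\tilde f^* c_L = n\,c\) on the level of polarization maps. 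That \(\tilde f\) is an isogeny, so that \((P,c)\) is a legitimate index for \(S_2\), follows from simplicity of \(A\) together with the fact that \(\tilde f^* c_L = nc\) with \(c\) principal forces \(\tilde f\) to have finite kernel.

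Next I would analyze the fibers of \(f \mapsto \tilde f\). Two maps with the same homomorphism part differ by a translation of \(A\), so the fiber over a fixed \(F \in S_2\) consists of the maps \(t_a \circ F \circ \mathrm{AJ}_\Gamma\) whose image lies in the \emph{honest} linear system \(|L|\), not merely in its algebraic-equivalence class, which \(F\) alone remembers. This is where \(\ker(A \to \hat A)\) enters: by Remark \ref{rmk_translation} the polarization map \(A \to \hat A\) is \(a \mapsto L^{-1} \otimes t_a^* L\), so its kernel is precisely the set of translations preserving \(L\), hence preserving membership in \(|L|\). Thus the admissible translations form a single coset of \(\ker(A \to \hat A)\). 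To upgrade this torsor to a product I would single out a canonical base curve \(C_F\), namely the image of the through-origin map \(F \circ \mathrm{AJ}_\Gamma\), and verify \(C_F \in |L|\). Every other curve in the fiber is then \(t_a(C_F)\) for a unique \(a \in \ker(A \to \hat A)\), giving the bijection fiber \(\cong \ker(A \to \hat A)\) and hence \(S_1 \cong \ker(A\to\hat A) \times S_2\).

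To finish I would write down the inverse explicitly and check that the two constructions are mutually inverse. Given \((a, F)\) with \(F : (P,c) \to (A, c_L)\), the Torelli hypothesis (Definition \ref{def_Torelli}, together with the observation following it that a genus \(g\) curve mapping to a simple Torelli \(A\) is automatically \(3\)-connected) supplies a \emph{unique} tropical curve \(\Gamma\) with \(J(\Gamma) \cong (P, c)\); I then set \(f = t_a \circ F \circ \mathrm{AJ}_\Gamma\) and confirm \(f \in S_1\), using that \(a \in \ker(A \to \hat A)\) keeps the image in \(|L|\). Chasing \(\tilde f = F\) and reading off the translation recovers \((a,F)\), and conversely.

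The main obstacle, and the part requiring genuine care, is the translation bookkeeping of the second paragraph rather than the functorial first step. There are two delicate points. First, one must show the fiber is a \emph{free} torsor: transitivity is the statement that any two curves in \(|L|\) with the same \(\tilde f\) differ by a translation, while freeness requires that no nontrivial \(a \in \ker(A \to \hat A)\) fixes the curve, which is exactly where simplicity of \(A\) (no proper subtori, hence no translation symmetry of a genus \(g\) image) is used. Second, one must reconcile the two slightly different incarnations of the linear-system condition: \(S_1\) remembers \(L\) as an honest line bundle, fixing the \(\mathrm{Pic}^0\) part, whereas the condition \(F^* c_L = nc\) defining \(S_2\) only remembers the polarization map. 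Verifying that the through-origin representative \(C_F\) lands in \(|L|\) on the nose, so that the torsor acquires a canonical trivialization rather than merely an equality of cardinalities \(|S_1| = |\ker(A\to\hat A)|\cdot|S_2|\), is the crux, and I expect it to rely on choosing the symmetric representative of \(L\) and on the symmetry of \(\Theta_\Gamma\).
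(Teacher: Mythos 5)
Your overall architecture matches the paper's: project $S_1$ to $S_2$ via the universal property of the Jacobian, identify the fiber with the translations $a$ satisfying $t_a^*L \cong L$, and use Remark \ref{rmk_translation} to recognize these as $\ker(A \to \hat{A})$. The forward direction and the translation bookkeeping are essentially what the paper does (and your worry about trivializing the torsor versus merely matching cardinalities is a fair, if secondary, point that the paper itself glosses over).

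There is, however, a genuine gap in your inverse construction. You write that the Torelli hypothesis ``supplies a unique tropical curve $\Gamma$ with $J(\Gamma) \cong (P,c)$.'' Torelli injectivity gives only \emph{uniqueness} of $\Gamma$ \emph{given} that $(P,c)$ is a Jacobian; it says nothing about \emph{existence}. An element of $S_2$ is an arbitrary principally polarized $(P,c)$ isogenous to $A$, and to send it back to $S_1$ you must first show that $(P,c)$ actually is the Jacobian of some genus $g$ tropical curve --- this is the (tropical) Schottky problem, not the Torelli problem. The paper resolves it constructively: the principal polarization yields the tropical theta function $\Theta(x) = \max_{\lambda \in I}\{Q(\lambda,x) - \tfrac{1}{2}Q(\lambda,\lambda)\}$, and the $1$-skeleton of the associated $I$-periodic Voronoi decomposition of $\Hom(I,\RR)$ descends to a genus $g$ tropical curve $\Gamma \subset P$ with $J(\Gamma) \cong P$; composing with $F$ gives the required element of $S_1$. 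This step is where the restriction to $g = 2,3$ enters (for larger $g$ not every principally polarized tropical abelian variety is a Jacobian, and the proposition as stated would fail). Your argument as written never uses $g \le 3$, which is the symptom of the missing ingredient: without the explicit construction of $\Gamma$ inside $P$, the map $S_1/_\sim \to S_2$ cannot be shown to be surjective.
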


\begin{prop}\label{prop_23}
Let \(S_2\) be defined as above, and let \(S_3\) be defined as
\[
S_3 = \{ X \xrightarrow{f_1} I \xrightarrow{f_2} \Lambda \mid rank(I) = g, f_2 \circ f_1 = c_L^\dag\}.
\]
Then there is a bijection \(S_2 \iff S_3\).
\end{prop}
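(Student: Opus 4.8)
The plan is to write the bijection down explicitly and verify that it is well defined in both directions. Using the remark after Definition \ref{definition-tropicalAV}, I may present any principally polarized $(P,c)$ occurring in $S_2$ as $P = \Hom(I,\RR)/I$ with $c = \mathrm{id}_I$ for a rank-$g$ lattice $I$ carrying a full-rank embedding $j_P \colon I \to \Hom(I,\RR)$. A homomorphism $F \colon (P,c)\to(A,c_L)$ is then a pair $(f_2, f_1)$ with $f_2\colon I\to\Lambda$ and $f_1\colon X\to I$ satisfying the homomorphism condition $j_A\circ f_2 = f_1^*\circ j_P$, and the membership condition becomes $F^*c_L = f_1\circ c_L\circ f_2 = n\cdot\mathrm{id}_I$. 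I send such an $F$ to the factorization $X\xrightarrow{f_1} I \xrightarrow{f_2}\Lambda$. Because $n\cdot\mathrm{id}_I$ is an isomorphism after $\otimes\,\QQ$, both $f_1$ and $f_2$ become isomorphisms over $\QQ$; there $f_1 = n\,f_2^{-1}c_L^{-1}$, so $f_2\circ f_1 = n\,c_L^{-1} = c_L^\dag$, and since both sides are integral maps agreeing over $\QQ$ they are equal. Thus the forward map lands in $S_3$.

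For the reverse map, given $X\xrightarrow{f_1} I\xrightarrow{f_2}\Lambda$ with $f_2 f_1 = c_L^\dag$ and $\mathrm{rank}(I)=g$, I set $\Lambda_P = X_P = I$, $c = \mathrm{id}_I$, $g_F = f_2$, $h_F = f_1$, and define the embedding $j_P := (f_1^*)^{-1}\circ j_A\circ f_2$. The homomorphism condition $j_A f_2 = f_1^* j_P$ then holds by construction. Since $c_L^\dag$ is injective and all three lattices have rank $g$, both $f_1$ and $f_2$ are isomorphisms over $\QQ$; hence $j_P$ is a full-rank lattice embedding, so $P$ is a genuine tropical torus. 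The membership condition $f_1 c_L f_2 = n\,\mathrm{id}_I$ follows from $c_L f_2 f_1 = c_L c_L^\dag = n\,\mathrm{id}_X$ by the same rearrangement over $\QQ$.

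The one substantive point is to check that $c = \mathrm{id}_I$ is a genuine principal polarization, i.e. that the induced form is symmetric and positive definite; this is where positivity of $c_L$ is actually used. Using $j_P = (f_1^*)^{-1} j_A f_2$ together with $f_1^{-1} = \tfrac1n c_L f_2$ (from $f_2 f_1 = n c_L^{-1}$), a direct evaluation gives
\[
\langle \lambda_1,\lambda_2\rangle_P \;=\; j_P(\lambda_1)(\lambda_2) \;=\; \tfrac1n\,\langle f_2\lambda_1, f_2\lambda_2\rangle_A .
\]
As $\langle\,,\rangle_A$ is symmetric and positive definite and $f_2$ is injective, $\langle\,,\rangle_P$ is symmetric and positive definite as well, so $(P,\mathrm{id}_I)$ is principally polarized and the reverse map lands in $S_2$.

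Finally, the two assignments are mutually inverse: the forward map recovers the pair $(f_1,f_2)$ from $F=(f_2,f_1)$ tautologically, while the reverse map recovers the embedding from the homomorphism condition via $j_P = (h_F^*)^{-1} j_A g_F$, which is exactly the defining formula. One should also observe that an isomorphism of the source $(P,c)$ on the $S_2$ side corresponds precisely to an isomorphism of the middle lattice $I$ on the $S_3$ side, so the correspondence descends to the isomorphism classes that are actually being counted. I expect the positivity computation for $\langle\,,\rangle_P$ to be the only real obstacle; everything else is bookkeeping with maps that become invertible over $\QQ$.
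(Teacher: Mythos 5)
Your proof is correct and follows essentially the same route as the paper's: read off the pair of lattice maps $(f_1,f_2)$ from a homomorphism $F$, and conversely reconstruct the lattice embedding $j_P = (f_1^*)^{-1}\circ j_A\circ f_2$ by an explicit formula (which agrees with the paper's $k=\tfrac{1}{d}f_2^*\circ j\circ f_1^\dag$, since $f_1^{-1}=\tfrac{1}{d}f_1^\dag$ rationally). The only differences are that you derive $f_2\circ f_1=c_L^\dag$ directly from $F^*c_L=n\cdot\mathrm{id}_I$ by inverting over $\QQ$ instead of invoking the dual morphism $\hat{F}$, and that you carry out the verifications the paper leaves as ``one can easily check,'' notably the symmetry and positive definiteness of the induced pairing $\langle i_1,i_2\rangle_P=\tfrac{1}{n}\langle f_2 i_1, f_2 i_2\rangle_A$.
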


\begin{prop}\label{prop_3sigma}
We have that
\[
|S_3| = \#\big\{H \leq \Lambda/c_L^\dag(X)\}.
\]
\end{prop}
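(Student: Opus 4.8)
The plan is to exhibit the equivalence classes of \(S_3\) (under isomorphism of the middle lattice) as precisely the intermediate lattices sitting between \(c_L^\dag(X)\) and \(\Lambda\), and then to invoke the standard correspondence between such intermediate lattices and subgroups of the quotient \(\Lambda/c_L^\dag(X)\).

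First I would record that \(c_L^\dag \colon X \to \Lambda\) is injective: since \(c_L \circ c_L^\dag = n\cdot \mathrm{id}_X\) with \(n \neq 0\), the composite is injective and hence so is \(c_L^\dag\). Given any factorization \(X \xrightarrow{f_1} I \xrightarrow{f_2} \Lambda\) in \(S_3\), the relation \(f_2 \circ f_1 = c_L^\dag\) forces \(f_1\) to be injective. I would then argue that \(f_2\) is injective as well: because \(X\) and \(I\) both have rank \(g\) and \(f_1\) is injective, \(f_1(X)\) has finite index in \(I\); since \(f_2\) is injective on \(f_1(X)\), the subgroup \(\ker f_2\) meets the finite-index subgroup \(f_1(X)\) trivially and is therefore finite, hence zero as \(I\) is torsion-free. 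Thus \(f_2\) embeds \(I\) as a sublattice \(M := f_2(I) \subseteq \Lambda\), and since \(c_L^\dag(X) = f_2(f_1(X)) \subseteq M\), we obtain an intermediate lattice \(c_L^\dag(X) \subseteq M \subseteq \Lambda\) of rank \(g\).

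Next I would show that the assignment \((I,f_1,f_2) \mapsto M\) is a bijection on isomorphism classes. Attach to each intermediate lattice \(M\) the \emph{standard} factorization \(I = M\), with \(f_2\) the inclusion and \(f_1 = c_L^\dag\) viewed as landing in \(M\). For an arbitrary factorization with image \(M\), the injective map \(f_2 \colon I \xrightarrow{\sim} M\) furnishes an isomorphism of the middle lattice intertwining \(f_1\) with \(c_L^\dag\) and \(f_2\) with the inclusion; hence every factorization mapping to \(M\) is isomorphic to the standard one, while distinct \(M\) yield non-isomorphic factorizations since \(M\) is recovered as the image of \(f_2\). This identifies \(|S_3|\) with the number of lattices \(M\) satisfying \(c_L^\dag(X) \subseteq M \subseteq \Lambda\).

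Finally, the lattice-correspondence theorem identifies such intermediate lattices with subgroups of \(\Lambda/c_L^\dag(X)\) via \(M \mapsto M/c_L^\dag(X)\), with inverse \(H \mapsto \pi^{-1}(H)\) where \(\pi\colon \Lambda \to \Lambda/c_L^\dag(X)\) is the projection; note that any such \(M\) automatically has rank \(g\) because it contains the full-rank sublattice \(c_L^\dag(X)\), so the rank condition in the definition of \(S_3\) is no extra constraint. Composing the two bijections gives \(|S_3| = \#\{H \leq \Lambda/c_L^\dag(X)\}\), as claimed. The step requiring the most care is the injectivity of \(f_2\) together with pinning down the correct equivalence relation on \(S_3\) (isomorphism of the middle lattice), so that the count is finite and well-defined; once that is in place, the remaining identifications are the routine subgroup–sublattice correspondence.
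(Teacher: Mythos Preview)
Your proof is correct and follows essentially the same route as the paper: identify a factorization with the intermediate lattice \(c_L^\dag(X) \subseteq f_2(I) \subseteq \Lambda\) and then invoke the correspondence with subgroups of \(\Lambda/c_L^\dag(X)\). The paper dispatches this in three lines by ``quotienting each term by \(X\)'', whereas you supply the details it leaves implicit---notably the injectivity of \(f_2\) and the fact that \(S_3\) is to be read up to isomorphism of the middle lattice.
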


\begin{cor}
We have that
\[
|S_1| = n \cdot \#\big\{H \leq \Lambda/c_L^\dag(X)\big\}.
\]
\end{cor}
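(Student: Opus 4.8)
The plan is to derive the corollary purely formally by composing the three bijections of Propositions \ref{prop_12}, \ref{prop_23}, and \ref{prop_3sigma} and passing to cardinalities; under the running hypotheses all the relevant sets are finite, so this is legitimate, and the only substantive point left is to evaluate \(\#\ker(A \to \hat{A})\).

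First I would chain the bijections. Proposition \ref{prop_12} gives \(S_1 \iff \ker(A \to \hat{A}) \times S_2\), hence
\[
|S_1| = \#\ker(A \to \hat{A}) \cdot |S_2|.
\]
Proposition \ref{prop_23} yields \(|S_2| = |S_3|\), and Proposition \ref{prop_3sigma} gives \(|S_3| = \#\{H \leq \Lambda/c_L^\dag(X)\}\). Substituting,
\[
|S_1| = \#\ker(A \to \hat{A}) \cdot \#\{H \leq \Lambda/c_L^\dag(X)\},
\]
so the corollary reduces to the single claim \(\#\ker(A \to \hat{A}) = n\).

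For the kernel computation, recall that \(A \to \hat{A}\) is induced on universal covers by \(c_L^* : \Hom(X,\RR) \to \Hom(\Lambda,\RR)\), where \(A = \Hom(X,\RR)/\Lambda\) and \(\hat{A} = \Hom(\Lambda,\RR)/X\). A class \([\xi] \in A\) lies in the kernel exactly when \(c_L^*(\xi)\) lands in the sublattice \(X \subset \Hom(\Lambda,\RR)\), so \(\ker(A \to \hat{A}) = (c_L^*)^{-1}(X)/\Lambda\). Since \(c_L\) is a polarization, \(c_L^*\) is an \(\RR\)-linear isomorphism and hence preserves lattice indices, giving \([(c_L^*)^{-1}(X) : \Lambda] = [X : c_L^*(\Lambda)]\). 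The key bookkeeping step is to identify \(c_L^*(\Lambda)\) with \(c_L(\Lambda) \subset X\) under the inclusion \(X \hookrightarrow \Hom(\Lambda,\RR)\): unwinding that inclusion (the transpose of \(j : \Lambda \hookrightarrow \Hom(X,\RR)\)), one finds \(c_L^*(j(\lambda)) = (\mu \mapsto \langle \lambda,\mu\rangle)\) while \(c_L(\lambda) \in X\) maps to \((\mu \mapsto \langle \mu,\lambda\rangle)\), and these agree precisely by the symmetry of \(\langle\,,\rangle\) established earlier. Therefore the index is \([X : c_L(\Lambda)] = n\), as required.

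The argument is essentially routine, and the main (mild) obstacle is exactly the identification in the last step: matching the two roles of \(c_L\) — as the defining map \(\Lambda \to X\) and as the restriction of the dual map \(c_L^*\) — so that the index is genuinely \([X : c_L(\Lambda)]\) and not some other lattice index. A clean way to sidestep this bookkeeping altogether is to invoke the general fact that for a finite homomorphism of \(g\)-dimensional tropical tori \(f = (g,h)\) the order of the kernel equals the topological degree \([\Lambda_2 : g(\Lambda_1)]\) (since the covering map \(h^*\) preserves lattice indices); for \(A \to \hat{A}\) this degree was already recorded as \(n\), and the corollary follows immediately.
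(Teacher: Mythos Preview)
Your proof is correct and follows essentially the same approach as the paper: chain the bijections of Propositions \ref{prop_12}, \ref{prop_23}, \ref{prop_3sigma} and then identify \(\#\ker(A\to\hat{A})\) with \(n\). The paper dispatches the kernel size in one line by citing that the topological degree of \(A\to\hat{A}\) is \(n\), which is precisely the shortcut you describe in your final paragraph; your longer lattice computation via the symmetry of \(\langle\,,\rangle\) is a valid unpacking of that same fact.
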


\begin{proof}
This follows from the Propositions \ref{prop_12}, \ref{prop_23}, and \ref{prop_3sigma}, combined with the fact that the topological degree of the map \(A \to \hat{A}\) is \(n = d_1\cdots d_g\), and hence its kernel consists of \(n\) elements.
\end{proof}

Let us prove the propositions in reverse order.

\begin{proof}[Proof of Proposition \ref{prop_3sigma}]
The set \(S_3\) consists of factorizations
\[
\xymatrix{
X \ar[r]^{f_1} \ar@/_1em/[rr]_{c_L^\dag} & I \ar[r]^{f_2} & \Lambda
}
\]
If we quotient out each term by \(X\) we end up with
\[
0 \to I/X \to \Lambda/X.
\]
We see that such a factorization is equivalent to a subgroup of \(\Lambda / c_L^\dag(X)\) as claimed.
\end{proof}

\begin{proof}[Proof of Proposition \ref{prop_23}]
Let \(F : P \to A\) be an element in \(S_2\), with \(P = \Hom(I, \RR)/I\). We have a dual morphism \(\hat{F} : \hat{A} \to P\) which yields a diagram
\[
\xymatrix{
X \ar[d]\ar[r]_{f_1}\ar@/^1em/[rr]^{c_L^\dag} & I \ar[d]\ar[r]_{f_2} & \Lambda\ar[d] \\
\Hom(\Lambda, \RR) \ar[r]_{{f_2}^*} & \Hom(I, \RR) \ar[r]_{f_1^*} & \Hom(X, \RR)
}
\]
for some pair of homomorphisms \((f_1, f_2)\). This yields an element of \(S_3\).

Conversely, suppose we have a factorization \(X \xrightarrow{f_1} I \xrightarrow{f_2}\Lambda\). This yields a diagram
\[
\xymatrix{
X \ar[d]\ar[r]_{f_1} & I \ar@{-->}[d]\ar[r]_{f_2} & \Lambda\ar[d] \\
\Hom(\Lambda, \RR) \ar[r]_{f_2^*} & \Hom(I, \RR) \ar[r]_{f_1^*} & \Hom(X, \RR)
}
\]
and we claim that we can fill in the dashed arrow to yield a principally polarized (tropical) abelian variety \(P = \Hom(I, \RR)/I\) which will then necessarily satisfy the conditions in \(S_2\). But this is easy: if we let \(j : X \to \Hom(\Lambda, \RR)\) be the inclusion of the lattice, and let \(d\) be the index \([I : f(X)]\), then we can simply define the map \(k : I \to \Hom(I, \RR)\) as
\[
k = \frac{1}{d}f_2^* \circ j \circ f_1^\dag.
\]
One can easily check that this makes the diagram commute, and moreover, the resulting tropical abelian variety is by definition principally polarized. This yields our inverse map \(S_3 \to S_2\).
\end{proof}

\begin{proof}[Proof of Proposition \ref{prop_12}]
This proof is roughly the same as in the proof of Theorem 3.2 of \cite{gottsche}. We begin by noting that the universal property of Jacobians (which is still valid in our case) yields a map \(S_1/_\sim \to S_2\), where we say that two maps in \(S_1\) are equivalent if they differ by translation in \(A\).

So consider an element \(F : P \to A\) of \(S_2\). Every principally polarized abelian variety of dimension \(g = 2, 3\) has a natural line bundle together with a section, which we call (by analogy with the complex case) the \(\Theta\)-function (see as usual \cite[Section 5.2]{MZ_tropical_jacobians}). This can be explicitly written as
\[
\Theta(x) = \max_{\lambda \in I} \big\{Q(\lambda, x) - \tfrac{1}{2}Q(\lambda, \lambda)\big\}
\]
where \(Q\) is the bilinear form on \(I\) resulting from the principal polarization. The Voronoi decomposition of \(\Hom(I, \RR) \cong \RR^g\) provides an \(I\)-periodic tiling of \(\RR^g\), whose 1-skeleton is a genus \(g\) tropical curve when projected to the quotient torus \(P\). The inclusion followed by the map \(F : P \to A\) yields the map \(f : \Gamma \to A\) that we desire\footnote{Note that this satisfies the conditions of Definition \ref{def_tropical_map} since the map \(\Gamma \to J(\Gamma)\) does.}. Moreover, since we are assuming that \(A\) is Torelli, it follows that the maps \(S_1/_\sim \to S_2\) and \(S_2 \to S_1/_\sim\) are inverse to each other (since \(P\) is isomorphic to the Jacobian \(J(\Gamma)\)).

Finally, all of this is up to translation in \(A\). We finally need to look at those elements  \(a \in A\) such that \(t_a^*L \cong L\). However, by Remark \ref{rmk_translation}, we see that this is the case if and only if \(a \in \ker (A \to \hat{A})\), whence the claim.
\end{proof}

\subsection{Multiplicity Computation}\label{subsec-multcon}
\subsubsection{Tropicalization of abelian varieties} We will first explain what we mean by \emph{tropicalization} of abelian varieties over $\mathbb{K}$. Our discussion follows closely \cite{BakRab}. To be precise, we allow arbitrary (non-degenerate) polarizations, but we restrict ourselves to abelian varieties uniformized by tori, as this is the only case we need. We continue to use the notation and terminology introduced in Section \ref{subsec:rigidunif}.

Let $ A $ be an abelian $\mathbb{K}$-variety of dimension $g$, which is uniformized by the data $ M \subset T$. The points $T(\mathbb{K})$ can be naturally identified with the group $ \mathrm{Hom}(\hat{M}, \mathbb{K}^*) $. Via the group homomorphism
$$ - \mathrm{log} \colon \mathrm{Hom}(\hat{M}, \mathbb{K}^*) \cong T(\mathbb{K}) \to \mathbb{R}^g $$
defined by $ (t_1, \ldots, t_g) \mapsto - (\mathrm{log} \vert t_1 \vert, \ldots, \mathrm{log} \vert t_g \vert) $,
we can identify $ M(\mathbb{K})$ with a full rank lattice $ \Lambda = - \mathrm{log}(M) $ in $ \mathbb{R}^g $. 

One easily checks that a polarization $c_L$ yields a polarization of the tropical torus $A^{\mathrm{tr}} = \mathbb{R}^g/\Lambda$, which we will continue to denote $c_L$. Thus, a pair $(A,L)$ tropicalizes to a tropical polarized abelian variety in the sense of Definition \ref{definition-tropicalAV}. In conclusion, we obtain a functor 
$$ (A,L) \mapsto (A^{\mathrm{tr}}, c_L). $$

We shall need one additional, crucial, fact from \cite{BakRab}. Let $C$ be a smooth projective and connected $\mathbb{K}$-curve of genus $g$, and let $J(C)$ denote its Jacobian variety. Let moreover $C^{\mathrm{tr}}$ denote the tropicalization of $C$, i.e., its \emph{minimal skeleton} in the sense of non-Archimedean geometry. Then one has a canonical isomorphism
$$ J(C^{\mathrm{tr}}) \cong J(C)^{\mathrm{tr}}$$
as principally polarized tropical abelian varieties (see \cite[2.9, 2.10]{BakRab}).

\begin{rmk}
The reader will find a closely related construction of tropicalization of abelian varieties in \cite{viv_torelli}, where the author works over a complete non-archimedean field with a discrete valuation, which allows the use of N\'eron models (instead of uniformization). 
\end{rmk}

\subsubsection{Analysis of multiplicities}

We can understand the multiplicity computation as follows. Let \(A\) be an abelian variety defined over \(\KK\) and let \(A^{\mathrm{tr}}\) be its tropicalization. Let \(f : \Gamma \to A^{\mathrm{tr}}\) be a tropical morphism. The multiplicity is then the number of curves \(C\) which tropicalize to \(\Gamma\) and which fit into the diagram
\[
\xymatrix{
C \ar@{-->}[rr]\ar@{-->}[d] & & A \ar[d] \\
\Gamma \ar[rr] & & A^{\mathrm{tr}}.
}
\]
That is, how many ways can we fill in the dashed corner? If we now examine the proof of Proposition \ref{prop_naive_count}, we see that it consists of two separate lifting problems. These are
\[
\xymatrix{
C \ar@{-->}[rr]\ar@{-->}[d] & & P \ar[d] & & P \ar@{-->}[rr]\ar@{-->}[d] & & A\ar[d]\\
\Gamma \ar[rr] & & J(\Gamma) & & P^{\mathrm{tr}} \ar[rr] & & A^{\mathrm{tr}}
}
\]
respectively, where \(P^{\mathrm{tr}}\) is a tropical principally polarized abelian variety (and \(P\) is a choice of a lift of it to \(\KK\)).

\begin{lem}
Let \(\Gamma\) be a tropical curve and let \(J(\Gamma)\) be its Jacobian. Let \(P\) be a principally polarized abelian variety defined over \(\KK\) which tropicalizes to \(J(\Gamma)\). Then there is a unique dashed lift that fits into the diagram
\[
\xymatrix{
C \ar@{-->}[rr] \ar@{-->}[d] && P\ar[d] \\
\Gamma \ar[rr] && J(\Gamma).
}
\]
\end{lem}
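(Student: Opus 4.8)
The plan is to build the lift from the \emph{classical} Torelli theorem over $\KK$ and then identify its tropicalization with $\Gamma$ via the \emph{tropical} Torelli theorem, gluing the two together through the Baker--Rabinoff compatibility $J(C)^{\mathrm{tr}} \cong J(C^{\mathrm{tr}})$ recorded above. For existence, I first observe that $P$ is uniformized by a torus (it tropicalizes to the full-dimensional tropical torus $J(\Gamma)$), and that $P$ is an \emph{indecomposable} principally polarized abelian variety, since its tropicalization $J(\Gamma)$ is simple. For $g = 2, 3$ every indecomposable principally polarized abelian variety is the Jacobian of a smooth projective curve, so there exists a curve $C/\KK$ together with an isomorphism of principally polarized abelian varieties $J(C) \cong P$. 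The candidate for the horizontal lift is then the (base-pointed) Abel--Jacobi map $\iota \colon C \to J(C) = P$.

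Next I would identify $C^{\mathrm{tr}}$ with $\Gamma$. By the Baker--Rabinoff isomorphism we have $J(C^{\mathrm{tr}}) \cong J(C)^{\mathrm{tr}} \cong P^{\mathrm{tr}} = J(\Gamma)$ as principally polarized tropical abelian varieties. Since $\Gamma$ is $3$-connected (as forced by the simple and Torelli hypotheses on $A$) and the tropical Jacobian determines the $3$-connected curve, the tropical Torelli theorem \cite[Theorem 4.1.9]{viv_torelli} yields $C^{\mathrm{tr}} \cong \Gamma$. Under this identification the tropicalization $\iota^{\mathrm{tr}}$ is the tropical Abel--Jacobi map $\Gamma \to J(\Gamma)$, which is exactly the bottom arrow of the diagram (choosing the base point of $C$ to tropicalize to that of $\Gamma$), so $(C,\iota)$ is indeed a lift.

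For uniqueness, suppose $\phi' \colon C' \to P$ is any lift. By the universal property of the Jacobian it factors as $C' \to J(C') \xrightarrow{\psi} P$. Tropicalizing and using that the tropical Abel--Jacobi image of $\Gamma$ generates $J(\Gamma)$, the map $\psi^{\mathrm{tr}}$ must be the identity of $J(\Gamma)$; hence $\psi$ is a homomorphism of abelian varieties of tropical degree $1$, so it is an isomorphism respecting the principal polarizations, giving $P \cong J(C')$. Classical Torelli over $\KK$ then forces $C' \cong C$, and the two Abel--Jacobi maps agree, establishing uniqueness. The main obstacle I expect is the bookkeeping at the interface of the two Torelli theorems: verifying that $\psi$, which only \emph{a priori} tropicalizes to the identity, is a genuine isomorphism of \emph{principally polarized} abelian varieties (so that classical Torelli applies), and confirming that $\iota^{\mathrm{tr}}$ is the tropical Abel--Jacobi map rather than merely \emph{some} map with the correct target. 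These compatibility checks, both resting on the faithfulness of tropicalization on morphisms of abelian varieties uniformized by tori, are where the real content lies.
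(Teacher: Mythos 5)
Your argument is essentially the paper's own proof, just written out in more detail: both rest on the fact that for $g=2,3$ a principally polarized abelian variety over $\KK$ is the Jacobian of a unique curve $C$, on the Baker--Rabinoff compatibility $J(C)^{\mathrm{tr}} \cong J(C^{\mathrm{tr}})$, and on the Torelli hypothesis to identify $C^{\mathrm{tr}}$ with $\Gamma$. Your added care about indecomposability (which the paper's terse statement ``$P = J(C)$ for a unique curve'' glosses over, and which you correctly deduce from simplicity of the tropicalization) and the explicit uniqueness argument are welcome refinements of the same route rather than a different one.
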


\begin{proof}
This relies on two facts. If \(P\) is a principally polarized abelian variety (over \(\KK\)) of dimension \(g = 2, 3\), then \(P = J(C)\) for a unique curve \(C\). As explained above, we have that \(J(C)^{\mathrm{tr}} = J(C^{\mathrm{tr}})\). Since we are assuming that \(A\) is Torelli, it follows that \(C^{\mathrm{tr}} = \Gamma\), from which it follows that the diagram can be completed uniquely. 
\end{proof}

For the final multiplicity computation, recall that the collection of maps \(P^{\mathrm{tr}} \to A^{\mathrm{tr}}\) is in bijection with the set \(\ker (A \to \hat{A}) \times S_3\).

\begin{lem}\label{lem_mult}
Let \((v, f_1, f_2)\) be an element of \(\ker(A^{\mathrm{tr}} \to \hat{A}^{\mathrm{tr}}) \times S_3\), where \(f_2 \circ f_1 = c_L^\dag\). Let \(G = I/f_1(X)\). Then the number of lifts of \(P^{\mathrm{tr}} \to A^{\mathrm{tr}}\) to \(\KK\) is given by
\[
n \cdot \# \Hom^{sym}(G, G^*).
\]
\end{lem}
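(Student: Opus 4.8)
The plan is to describe a lift of the map $P^{\mathrm{tr}} \to A^{\mathrm{tr}}$ explicitly in terms of rigid uniformization and then to separate the contribution of the translation $v$ from that of the homomorphism $(f_1,f_2)$. Concretely, I would fix the uniformization $N_A \subset \Hom(X,\KK^*)$ of the given $A$, write $m_\lambda \in N_A$ for the period with $-\mathrm{log}(m_\lambda) = \lambda$, and record that a principally polarized $P$ uniformized by a torus with character lattice $I$ (which exists by Lemma \ref{lemma:torusunif}) is the same data as a symmetric bilinear form $b \colon I \times I \to \KK^*$ with $-\mathrm{log}|b| = Q$ positive definite, where $Q$ is the form of $P^{\mathrm{tr}}$ and $b(\alpha,\beta) = n_\alpha(\beta)$ for the period $n_\alpha$ attached to $\alpha \in I$. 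The homomorphism $F \colon P \to A$ lifting $(f_1,f_2)$ exists precisely when the torus map induced by $f_1 \colon X \to I$ carries the period lattice of $P$ into $N_A$ compatibly with $f_2$, i.e.
\[
b(\alpha, f_1(\chi)) = m_{f_2(\alpha)}(\chi) \quad \text{for all } \alpha \in I,\ \chi \in X.
\]
This pins down $b$ on $I \times f_1(X)$ to fixed elements of $\KK^*$.

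For the translation I would count the points $\tilde v \in \ker(A \to \hat{A})(\KK)$ tropicalizing to $v$. Since $A \to \hat{A}$ is an isogeny of degree $n^2$ its kernel has $n^2$ geometric points, while the tropical kernel $\ker(A^{\mathrm{tr}} \to \hat{A}^{\mathrm{tr}})$ has $n$ points; as tropicalization restricted to the kernel is a surjective group homomorphism, every nonempty fibre is a coset and hence has cardinality $n^2/n = n$. This produces the factor $n$, independently of $v$.

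For the homomorphism part the count becomes: how many symmetric $b$ satisfy $-\mathrm{log}|b| = Q$ together with the boundary condition above. The key finiteness input is that $f_1$ is injective with finite cokernel $G = I/f_1(X)$, so for $\beta \in I$ some multiple $e\beta$ (e.g. $e = |G|$) lies in $f_1(X)$, whence $b(\alpha,\beta)^e = b(\alpha, e\beta)$ is a fixed element of $\KK^*$; as $\KK$ is algebraically closed, $b(\alpha,\beta)$ is determined up to an $e$-th root of unity. Comparing two valid forms $b, b'$, their ratio $\rho := b/b'$ is a symmetric bilinear map $I \times I \to \{u : |u| = 1\}$ that is trivial on $I \times f_1(X)$ and, by symmetry, on $f_1(X) \times I$, hence descends to a symmetric bilinear map $G \times G \to \KK^\times$, that is, an element of $\Hom^{sym}(G, G^*)$; conversely every such form carries a valid $b$ to another. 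Thus the set of valid $b$, if non-empty, is a torsor under $\Hom^{sym}(G, G^*)$ and has cardinality $\#\Hom^{sym}(G, G^*)$.

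It remains to verify non-emptiness and that distinct $b$ yield distinct lifts. Rigidity is immediate, since we have fixed the ambient tori and the lattice $N_A$, so $b$ is literally the period data of the lift $(P,F)$. For non-emptiness I would extend the prescribed values from $I \times f_1(X)$ to a symmetric form on $I \times I$ with exactly the prescribed valuations, using that roots of arbitrary elements exist in $\KK$ and that the prescription is already symmetric on $f_1(X) \times f_1(X)$ (which follows from symmetry of the polarization form on $A$ established earlier). Combining the two independent counts then gives $n \cdot \#\Hom^{sym}(G, G^*)$. The hard part will be the bookkeeping in this last step: producing a symmetric extension realizing the prescribed valuations while respecting the fixed boundary values, and checking that the boundary prescription $b(\alpha, f_1(\chi)) = m_{f_2(\alpha)}(\chi)$ is consistent (in particular symmetric) on $f_1(X) \times f_1(X)$, where one must use the relation $c_L \circ c_L^\dag = n\cdot\mathrm{id}$ together with the symmetry of the polarization pairing.
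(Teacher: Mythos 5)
Your proposal is correct and follows essentially the same route as the paper: both reduce the count of lifts of the homomorphism to counting symmetric bilinear forms $b\colon I\times I\to\KK^\times$ whose restriction to $I\times f_1(X)$ is pinned down by the data of $A$, and both identify the set of such forms as a torsor under $\Hom^{sym}(G,G^*)$. The only differences are minor: you derive the factor $n$ for translations by a fibre count on $\ker(A\to\hat{A})\to\ker(A^{\mathrm{tr}}\to\hat{A}^{\mathrm{tr}})$ (kernel of order $n^2$ surjecting onto $n$ tropical points), where the paper instead runs a snake-lemma diagram showing this map is surjective with kernel $\ker(c_L^*)$ of order $n$; and you explicitly flag the non-emptiness of the set of valid $b$, a point the paper passes over by simply fixing a base point $b_0$.
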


\begin{proof}
Our goal is to determine the number of dashed arrows that fit into the diagram
\begin{equation}\label{eq_fill_in}
\xymatrix{
X \ar[r]^{f_1} \ar[d] & I \ar[r]^{f_2} \ar@{-->}[d] & \Lambda \ar[d] \\
\Hom(\Lambda, \KK^\times) \ar[r]^{f_2^*} \ar[d] & \Hom(I, \KK^\times) \ar[r]^{f_1^*} \ar[d] & \Hom(X, \KK^\times) \ar[d] \\
\Hom(\Lambda, \RR) \ar[r]^{f_2^*}  & \Hom(I, \RR) \ar[r]^{f_1^*}  & \Hom(X, \RR),
}
\end{equation}
as well as the number of lifts \(\ker(A \to \hat{A}) \to \ker(A^{\mathrm{tr}} \to \hat{A}^{\mathrm{tr}})\).

We will without loss of generality assume that \(v = 0\), so that the map \(P^{\mathrm{tr}} \to A^{\mathrm{tr}}\) is a homomorphism.

For the next part of the proof, we will regard a map \(I \to \Hom(I, \KK^\times)\) as a bilinear map \(I \times I \to \KK^\times\). In particular, we are thus trying to determine the number of (symmetric) bilinear maps \(I \times I \to \KK^\times\) (which make the diagram \eqref{eq_fill_in} commute, when viewed as a map \(I \to \Hom(I, \KK^\times)\)). Let \(B\) be the set of such maps, and let \(b_0 \in B\) be fixed.

We claim that there is a bijection \(B \to \Hom^{sym}(G, G^*)\). There is a map \(\Hom^{sym}(G, G^*) \to B\) given as follows. Let \(\varphi : G \times G \to \KK^\times\). Then we map \(\varphi\) to
\[
(i_1, i_2) \mapsto b_0(i_1, i_2)\varphi([i_1], [i_2])
\]
which is an element of \(B\).

To construct the inverse, choose \(b \in B\) arbitrary, and define a map \(\varphi_b : G \times G \to \KK^\times\) as
\[
(g_1, g_2) \mapsto \frac{b(\overline{g_1}, \overline{g_2})}{b_0(\overline{g_1}, \overline{g_2})}
\]
where \(\overline{g_i}\) is any lift of \(g_i\) to \(I\). As any pair of lifts only varies by an element of \(X\); since the functions in \(B\) agree on elements of \(X\), it follows that this is well defined. Finally, since the functions in \(B\) are symmetric, it follows that so are the resulting functions on \(G\).

Finally, note that \(K = \ker(A \to \hat{A})\) and \(K^{\mathrm{tr}} = \ker(A^{\mathrm{tr}} \to \hat{A^{\mathrm{tr}}})\) fit into the diagram (via the Snake Lemma)
\[
\xymatrix{
0 \ar[r]& \ker(c_L^*) \ar[r]\ar[d] & K \ar[r]\ar[d] & K^{\mathrm{tr}} \ar[d] \ar[r] & 0\\
0 \ar[r] & \Hom(X, \mu_n) \ar[r]\ar[d]_{c_L^*} & A[n] \ar[r]\ar[d] & A^{\mathrm{tr}}[n] \ar[r]\ar[d] & 0 \\
0 \ar[r] & \Hom(\Lambda, \mu_n) \ar[r] & \hat{A}[n] \ar[r] & \hat{A^{\mathrm{tr}}}[n] \ar[r] & 0. 
}
\]
Since the bottom two rows are split exact, it follows that \(K \to K^{\mathrm{tr}}\) is surjective, and since \(\ker(c_L^*)\) has \(n\) elements, it follows that translations contribute a factor of \(n\). In particular, the total multiplicity is \(n \cdot \#\Hom^{sym}(G, G^*)\) as claimed.

\end{proof}

We can now combine these facts to prove Theorem \ref{thm_main_A}.

\begin{proof}
We have that the collection of morphisms from tropical curves to \(A\) in the linear system \(|L|\) is in bijection with the set \(S_1 \iff \ker(A \to \hat{A}) \times S_3\), each of which contributes multiplicity according to Lemma \ref{lem_mult}. Thus the total count of such curves is
\begin{align*}
\sum_{(v, f, g) \in S_1} m_{(v,f,g)} &= \sum_{v \in \ker(A \to \hat{A})}\sum_{H \leq \Lambda/c_L^\dag(X)} n\cdot \#\Hom^{sym}(H, H^*) \\
 &= n \cdot |\ker(A \to \hat{A})| \sum_{H \leq \Lambda/c_L^\dag(X)} \#\Hom^{sym}(H, H^*) \\
 &= n^2 \nu^{\dag}(d_1, \ldots, d_g)
\end{align*}
as claimed.

\end{proof}

There is an important corollary hiding in the above work.

\begin{cor}
Let \((A, c)\) and \((A', c')\) be simple Torelli polarized tropical abelian varieties with polarization of type \((d_1, \ldots, d_g)\). Then the count of genus \(g\) curves in \((A,c), (A',c')\) coincide.
\end{cor}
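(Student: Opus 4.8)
The plan is to observe that the count produced by Theorem~\ref{thm_main_A} depends only on the polarization type $(d_1, \ldots, d_g)$, and not on the particular tropical abelian variety realizing it. Indeed, the formula established in the proof of Theorem~\ref{thm_main_A} is $n^2 \nu^\dag(d_1, \ldots, d_g)$ with $n = d_1 \cdots d_g$, an expression that manifestly depends only on the tuple $(d_1, \ldots, d_g)$. Since $(A, c)$ and $(A', c')$ share the same polarization type by hypothesis, their counts must agree, and this is essentially all the corollary asserts.

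To make this transparent, I would trace through the two ingredients of the count and check that each factors through abstract group-theoretic data. The na\"ive count of Proposition~\ref{prop_naive_count} is $n \cdot \#\{H \leq \Lambda/c_L^\dag(X)\}$; here $\Lambda/c_L^\dag(X)$ is the group $G^\dag$ attached to $G = X/c_L(\Lambda) \cong \ZZ/d_1\ZZ \times \cdots \times \ZZ/d_g\ZZ$, whose isomorphism class --- and hence its number of subgroups --- is determined by the type. For the multiplicities, Lemma~\ref{lem_mult} attaches to each factorization the weight $n \cdot \#\Hom^{sym}(G, G^*)$ for the finite group $G = I/f_1(X)$; as the factorization ranges over $S_3$ this group runs over the subgroups $H \leq G^\dag$ (since an injective $f_2$ gives $I/f_1(X) \cong f_2(I)/c_L^\dag(X)$), so summing the weights reproduces exactly $n \cdot \nu^\dag(d_1, \ldots, d_g)$ by the definition of $\nu^\dag$ in Section~\ref{sec_notation}. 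Each of these quantities is an invariant of the isomorphism type of the relevant finite abelian group, and therefore of the polarization type alone.

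The step requiring the most care is checking that no geometric data beyond the type enters the computation --- in particular, that the simple and Torelli hypotheses, which power the bijections of Propositions~\ref{prop_12}, \ref{prop_23}, and \ref{prop_3sigma}, do not themselves introduce a dependence on the specific variety. They do not: these hypotheses are invoked only to guarantee that the relevant bijections hold, after which the cardinalities on both sides are purely group-theoretic and governed by the type. Since both $(A,c)$ and $(A',c')$ satisfy them, each count collapses to the same invariant $n^2 \nu^\dag(d_1, \ldots, d_g)$, and the corollary follows.
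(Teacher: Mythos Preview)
Your proposal is correct and follows exactly the paper's own approach: the paper's proof is the single observation that the formula $n^2\nu^\dag(d_1,\ldots,d_g)$ from Theorem~\ref{thm_main_A} depends only on the polarization type and not on the specific variety. Your write-up simply unpacks this observation in more detail, verifying that each ingredient (the na\"ive count and the multiplicities) factors through the isomorphism type of the finite group $\Lambda/c_L^\dag(X)$, which is determined by $(d_1,\ldots,d_g)$.
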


\begin{proof}
We note that the count in the above theorem depends only on the type \((d_1, \ldots, d_g)\) and makes no reference to \(A\) or \(c\) beyond that, hence the claim.
\end{proof}

\section{Some specific computations}

We will now compare these to other known results and provide a few numerical computations.

\begin{thm*}[Theorem \ref{thm_main}]
Let \((A,c_L)\) be a simple polarized abelian surface with polarization of type \((1, n)\). Then the number of genus 2 curves in \(|L|\) is \(n^2\sigma_1(n)\).
\end{thm*}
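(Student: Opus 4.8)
The plan is to specialize the general formula from Theorem~\ref{thm_main_A} to the case $g = 2$ with polarization type $(1,n)$, and then show that the resulting arithmetic quantity $n^2 \nu^\dag(1,n)$ simplifies to $n^2 \sigma_1(n)$. By Theorem~\ref{thm_main_A}, the count is $(d_1 d_2)^2 \nu^\dag(d_1, d_2) = n^2 \nu^\dag(1, n)$, so the entire content of this theorem reduces to the purely combinatorial identity $\nu^\dag(1, n) = \sigma_1(n)$. I would begin by unwinding the definition: using the remark following the definition of $\nu^\dag$, we have $\nu^\dag(1, n) = \nu\big(\tfrac{n}{n}, \tfrac{n}{1}\big) = \nu(1, n)$, so it suffices to compute $\nu(1,n) = \nu(G)$ where $G \cong \ZZ/1\ZZ \times \ZZ/n\ZZ \cong \ZZ/n\ZZ$ is cyclic.

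**Next I would** reduce to the computation of $\nu$ on cyclic groups. By definition,
\[
\nu(\ZZ/n\ZZ) = \sum_{H \leq \ZZ/n\ZZ} \# \Hom^{sym}(H, H^*).
\]
Every subgroup $H$ of the cyclic group $\ZZ/n\ZZ$ is itself cyclic of order $d$ for some $d \mid n$, and there is exactly one such subgroup for each divisor $d$. For a cyclic group $H \cong \ZZ/d\ZZ$, I would observe that every homomorphism $H \to H^* \cong \Hom(\ZZ/d\ZZ, \KK^\times) \cong \ZZ/d\ZZ$ is automatically symmetric as a bilinear form, since a bilinear form on a \emph{cyclic} group is determined by the single value on a pair of generators and hence is trivially symmetric. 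Therefore $\#\Hom^{sym}(H, H^*) = \#\Hom(\ZZ/d\ZZ, \ZZ/d\ZZ) = d$.

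**Combining these observations**, I would compute
\[
\nu(\ZZ/n\ZZ) = \sum_{d \mid n} \#\Hom^{sym}(\ZZ/d\ZZ, (\ZZ/d\ZZ)^*) = \sum_{d \mid n} d = \sigma_1(n).
\]
This yields $\nu^\dag(1,n) = \sigma_1(n)$, and substituting into the general count gives $n^2 \sigma_1(n)$, as claimed. Finally, I would note that the simplicity hypothesis on $(A, c_L)$ together with the $g=2$ assumption guarantees, via the remark following Definition~\ref{def_simple}, that $A$ contains no elliptic curves and is Torelli (any genus $2$ curve is automatically $3$-connected when its Jacobian is simple), so that the hypotheses of Theorem~\ref{thm_main_A} are satisfied and the formula applies.

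**The main obstacle** is the verification that for a \emph{cyclic} group every homomorphism $H \to H^*$ is symmetric, since this is precisely the step where the primitivity of the polarization (forcing $G$ to be cyclic) enters essentially; for non-cyclic $G$ the symmetry condition is a genuine restriction and $\nu$ does not reduce to $\sigma_1$. I expect this to be a short verification rather than a deep difficulty, but it is the conceptual crux that makes the $(1,n)$ case collapse to the classical Göttsche formula.
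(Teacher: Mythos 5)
Your proposal is correct and follows essentially the same route as the paper: both reduce to the identity \(\nu^\dag(1,n) = \nu(1,n) = \sigma_1(n)\) by summing \(\#\Hom^{sym}(H,H^*) = d\) over the cyclic subgroups \(H \cong \ZZ/d\ZZ\) of \(\ZZ/n\ZZ\), using that homomorphisms from a cyclic group to its dual are automatically symmetric. Your additional explicit unwinding of \(\nu^\dag(1,n) = \nu(n/n, n/1)\) and the remark on the hypotheses of Theorem \ref{thm_main_A} are consistent with, though left implicit in, the paper's argument.
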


\begin{proof}
It suffices to show that \(\nu(1, n) = \sigma_1(n)\). But this is clear: the number of symmetric morphisms from \(\ZZ/d\ZZ\) to \((\ZZ/d\ZZ)^*\) is \(d\), since such homomorphisms are trivially symmetric. Thus
\[
\sum_{H \leq \ZZ/n\ZZ} \# \Hom^{sym}(H, H^*) = \sum_{d \mid n} d = \sigma_1(n)
\]
as claimed.
\end{proof}

We also have the following case.

\begin{prop}
Let \(p\) be a prime. Then
\[
\nu(p, pn) = \sigma_1(p^2n) + p^3 \sigma_1(n)
\]
\end{prop}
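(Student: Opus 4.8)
The plan is to reduce to the case of a prime power via the multiplicativity already established, and then to compute \(\nu\) on a \(2\)-generated \(p\)-group by directly enumerating subgroups.

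First I would write \(n = p^a m\) with \(\gcd(m,p) = 1\), so that \(pn = p^{a+1}m\) and hence \(G := \ZZ/p\ZZ \times \ZZ/pn\ZZ \cong \big(\ZZ/p\ZZ \times \ZZ/p^{a+1}\ZZ\big) \times \ZZ/m\ZZ\) by the Chinese Remainder Theorem. The two factors have coprime orders \(p^{a+2}\) and \(m\), so the multiplicativity proposition gives \(\nu(G) = \nu\big(\ZZ/p\ZZ \times \ZZ/p^{a+1}\ZZ\big)\cdot\nu(\ZZ/m\ZZ)\). The cyclic factor contributes \(\nu(\ZZ/m\ZZ) = \sigma_1(m)\), exactly as in the proof of Theorem~\ref{thm_main}. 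Since \(\sigma_1\) is multiplicative and \(\sigma_1(p^2 n) + p^3\sigma_1(n) = \sigma_1(m)\big(\sigma_1(p^{a+2}) + p^3\sigma_1(p^a)\big)\), the claim reduces to the single identity \(\nu(p,p^{a+1}) = \sigma_1(p^{a+2}) + p^3\sigma_1(p^a)\); equivalently, writing \(b = a+1\), I must show \(\nu\big(\ZZ/p\ZZ \times \ZZ/p^b\ZZ\big) = \sigma_1(p^{b+1}) + p^3\sigma_1(p^{b-1})\).

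For the \(\Hom^{sym}\) computation, I would record that for \(H = \ZZ/e_1\ZZ \times \ZZ/e_2\ZZ\) a symmetric homomorphism \(H \to H^*\) is the same as a symmetric bilinear form \(b\colon H \times H \to \KK^\times\), which is freely specified by its diagonal values \(b(g_i,g_i)\) (an \(e_i\)-th root of unity, so \(e_i\) choices) and by \(b(g_1,g_2) = b(g_2,g_1)\) (a \(\gcd(e_1,e_2)\)-th root of unity). Hence \(\#\Hom^{sym}(H,H^*) = e_1 e_2 \gcd(e_1,e_2)\); in particular this is \(p^j\) for a cyclic \(H = \ZZ/p^j\ZZ\) and \(p^{j+2}\) for \(H = \ZZ/p\ZZ \times \ZZ/p^j\ZZ\) with \(j \geq 1\). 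Next I would enumerate the subgroups of \(G = \ZZ/p\ZZ \times \ZZ/p^b\ZZ\): each is either cyclic of type \(\ZZ/p^j\ZZ\) (\(0 \le j \le b\)) or of rank two of type \(\ZZ/p\ZZ \times \ZZ/p^j\ZZ\) (\(1 \le j \le b\)), since the smaller elementary divisor cannot exceed \(p\). Counting elements of each order gives one cyclic subgroup of order \(1\), exactly \(p+1\) of order \(p\) (the lines in \(G[p] \cong \ZZ/p\ZZ \times \ZZ/p\ZZ\)), and \(p\) of each order \(p^j\) for \(2 \le j \le b\). For the rank-two subgroups I would use the clean observation that any such \(H\) satisfies \(H[p] = G[p]\), hence \(H \supseteq G[p]\); therefore they correspond bijectively to subgroups of the cyclic quotient \(G/G[p] \cong \ZZ/p^{b-1}\ZZ\), yielding exactly one rank-two subgroup of each type \(\ZZ/p\ZZ \times \ZZ/p^j\ZZ\), \(1 \le j \le b\).

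Finally I would assemble the two sums. The cyclic contribution \(1 + (p+1)p + \sum_{j=2}^{b} p\cdot p^{j}\) collapses to the geometric sum \(\sum_{k=0}^{b+1} p^k = \sigma_1(p^{b+1})\), while the rank-two contribution is \(\sum_{j=1}^{b} p^{j+2} = p^3\sum_{k=0}^{b-1}p^k = p^3\sigma_1(p^{b-1})\); adding these gives the required identity, and multiplicativity then recovers the general \(n\). The main obstacle is the subgroup bookkeeping in the prime-power case: verifying that the rank-two subgroups are precisely those containing \(G[p]\) (so that they biject with the subgroups of the cyclic quotient) and correctly isolating the exceptional count \(p+1\) of order-\(p\) cyclic subgroups. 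Once these counts are pinned down, both sums collapse to the stated \(\sigma_1\)-values and the proof concludes.
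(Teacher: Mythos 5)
Your proof is correct and follows essentially the same strategy as the paper: split the subgroups of \(\ZZ/p\ZZ \times \ZZ/pn\ZZ\) into cyclic and rank-two ones, compute \(\#\Hom^{sym}(H,H^*)\) for each type, and use multiplicativity to handle the part of \(n\) prime to \(p\). You supply more detail than the paper does (the explicit subgroup counts \(1\), \(p+1\), \(p\), and the observation that every rank-two subgroup contains \(G[p]\) and hence corresponds to a subgroup of the cyclic quotient), where the paper simply asserts the cyclic sum equals \(\sigma_1(p^2n)\) as ``easy'' and that the non-cyclic subgroups are exactly one of each type \(\ZZ/p\ZZ\times\ZZ/pd\ZZ\) for \(d \mid n\).
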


\begin{proof}
We separate the proof into two parts. Let \(G = \ZZ/p\ZZ \times \ZZ/(pn)\ZZ\). First, we note that the cyclic subgroups satisfy
\[
\sum_{\substack{H \leq G \\ H \text{ is cyclic}}} \#\Hom(H, H^*) = \sigma_1(p^2n)
\]
(Since both sides are multiplicative, this can be reduced to the case \(n = p^\ell\), which is easy). Next, we have that the non cyclic subgroups (which are all of the form \(\ZZ/p\ZZ \times \ZZ/(pd)\ZZ\) for some \(d \mid n\)) satisfy
\begin{multline*}
\Hom^{sym}\big(\ZZ/p\ZZ \times \ZZ/(pd)\ZZ, (\ZZ/p\ZZ \times \ZZ/(pd)\ZZ)^*\big) \\
= \Hom\big(\ZZ/p\ZZ, (\ZZ/p\ZZ)^*\big) \times \Hom\big(\ZZ/p\ZZ, (\ZZ/(pd)\ZZ)^*\big) \times \Hom\big(\ZZ/(pd)\ZZ, (\ZZ/(pd)\ZZ)^*\big)
\end{multline*}
which has order \(p^3d\). Summing over all of these yields \(p^3\sigma_1(n)\) as claimed.
\end{proof}

It follows from this that the generating function \(\sum_{n=1}^\infty \nu(p, pn) q^n\) can be written as
\[
\sum_{n=1}^\infty \nu(p, pn) q^n = p^3E_2(\varepsilon^{p^2}) + E_2(\varepsilon) + \frac{p^3 + 1}{24}
\]
where \(q = \varepsilon^{p^2}\), and where \(E_2(q) = -\frac{1}{24} + \sum_{n=1}^\infty \sigma_1(n)q^n\) is the Eisenstein series of weight 2. In particular, these generating functions are quasimodular forms (if not of pure weight) for a non-trivial congruence subgroup of \(SL_2\ZZ\). Moreover, one suspects that there is a consistent definition of \(\nu(p, 0)\) which would allow us to account for the constant term; this could possibly be further explored using the language of tropical Gromov-witten theory.

For other cases, we produce the following table of values (which have been computed in a custom SAGE program). Note that this does {\em not} agree with the table in \cite{LS_curves_genus_g}, which has some numerical errors (which were also noted in \cite{BOPY_curves}). Furthermore, unlike the values \(\nu(1,d) = \sigma_1(d)\), these do not form a recognizable sequence as, say, the coefficients of a modular form.

\begin{center}
\begin{tabular}{c|c|c||c|c}
\(d\) & \(\nu(d,d)\) & \(\nu(d,d,d)\) &\((d_1, d_2)\) & \(\nu(d_1, d_2)\) \\
\hline
2 & 15 & 135 & (2, 4) & 39\\
3 & 40 & 1120 & (2, 6) & 60\\
4 & 151 & 11,287 & (2, 8)& 87\\
5 & 156 & 19,656 & (2, 10)& 90\\
6 & 600 & 151,200 & (2, 12)& 156\\
7 & 400 & 137,600 & (3, 6)& 120\\
8 & 1335 & 810,135 & (3, 9)& 148\\
9 & 1201 & 915,853 & (3, 12)& 280\\
10 & 2340 & 2,653,560 & (4, 8)& 375\\
11 & 1464 & 1,950,048 & (4, 12)& 604\\
12 & 6040 & 12,641,440 & (4, 16)& 823\\
13 & 2380 & 5,231,240 & (5, 10)& 468\\
14 & 6000 & 18,576,000 & (5, 15)& 624\\
15 & 6240 & 22,014,720 & (6, 12)& 1560\\
16 & 11,191 & 54,681,751 & (6, 18)& 2220\\
\end{tabular}
\end{center}

\section{Conclusion and further work}

The goal of this paper was to work towards understanding how the count of curves in abelian varieties adapts to the tropical setting. In the complex setting, this has been studied e.g. in \cite{BL_ab}, \cite{LS_curves_genus_g}, and \cite{Rose_thesis_paper}.

In future work we intend to look at extending this to higher genera in a number of potential ways. One could naturally look at developing a tropical analogue of \cite{BL_ab}, which would presumably require a version of reduced tropical Gromov-Witten theory.

One could instead focus on hyperelliptic curves, using the work of \cite{chan_hyperelliptic} on tropical hyperelliptic curves. One should expect some interesting results either by trying to produce an analogous argument to the one provided in this paper, combined with an understanding of the Jacobians of tropical hyperelliptic curves.

One could furthermore follow in the vein of \cite{Rose_thesis_paper} and look at trying to understand tropical genus 0 curves in the associated tropical Kummer surface to \(A\). This would require an understanding of the relationship between the count of such curves and the count of hyperelliptic curves in \(A\); in \cite{Rose_thesis_paper} this was provided by using orbifold Gromov-Witten theory, which to the best of our knowledge has not been developed in the tropical setting.

\bibliographystyle{alpha}
\bibliography{cheb}

\end{document}